\renewcommand{\figurename}
\newtheorem{theorem}{Theorem}[section]
\newtheorem{lemma}[theorem]{Lemma}
\newtheorem{corollary}[theorem]{Corollary}
\newtheorem{conjecture}[theorem]{Conjecture}
\begin{document}
\title{\Large Gallai's path decomposition conjecture for cartesian product of graphs (\uppercase\expandafter{\romannumeral 2})\footnote{Research supported by NSFC (No.12061073)}}

\author{ {Xiaohong Chen\footnote{Email: xhongchen0511@163.com (X. Chen)}, Baoyindureng Wu\footnote{Corresponding author.
Email: baoywu@163.com (B. Wu) }  }\\
\small  \small College of Mathematics and System Science, Xinjiang
University \\ \small  Urumqi 830046, P.R.China \\}
\date{}

\maketitle

{\small \noindent{\bfseries Abstract}:
Let $G$ be a graph of order $n$. A path decomposition $\mathcal{P}$ of $G$ is a collection of edge-disjoint paths that covers all the edges of $G$. Let $p(G)$ denote the minimum number of paths
needed in a path decomposition of $G$. Gallai conjectured that if $G$ is connected, then $p(G)\leq \lceil\frac{n}{2}\rceil$. In this paper, we prove that Gallai's path decomposition conjecture holds for the cartesian product $G\Box H$, where $H$ is any graph and $G$ is a unicyclic graph or a bicyclic graph.
\\{\bfseries Keywords}: Cycle; Decomposition; Gallai's conjecture; Cartesian product

\section {\large Introduction}

All graphs considered here are finite, undirected and simple. We refer to \cite{Bondy} for unexplained terminology and notation. Let $G=(V(G), E(G))$ be a graph. The order $|V(G)|$ and size $|E(G)|$ are denoted by $n(G)$ and $e(G)$, respectively. The degree and the neighborhood of a vertex $v$ are denoted by $d_G(v)$ and $N_G(v)$, respectively. A vertex is called {\it odd} or {\it even} depending on whether its degree is odd or even, respectively. A graph in which every vertex is odd or even is called an {\it odd graph} or an {\it even graph}. The number of odd vertices and even vertices of $G$ are denoted by $n_o(G)$ and $n_e(G)$, respectively. Let $S\subseteq V(G)$. The subgraph induced by $S$, denoted by $G[S]$, is the graph with $S$ as its vertex set, in which two vertices are adjacent if and only if they are adjacent in $G$.  The {\it union} of simple graphs $G$ and $H$ is the graph $G\cup H$ with vertex set $V(G) \cup V(H)$ and edge set $E(G) \cup E(H)$. We use $G+uv$ to denote the graph that arises from $G$ by adding an edge $uv\notin E(G)$, where $u, v \in V(G)$. Similarly, $G-u$ is a graph that arises from $G$ by deleting the vertex $u \in V(G)$. As usual, we use $P_n$ and $C_n$ to denote the path and cycle of order $n$, respectively.

A {\it path decomposition $\mathcal{P}$} of a graph $G$ is a collection of edge-disjoint paths that covers all the edges of $G$. We use $p(G)$ to denote the minimum number of paths needed for a path decomposition of $G$. Erd\H{o}s asked that what is the minimum number of paths into which every connected graph can be decomposed. As a response to the question of Erd\H{o}s, Gallai made the following conjecture:

\begin{conjecture} [\cite{Lovasz1968}]\label{1.1}
For any connected graph $G$ of order $n$, then $p(G)\leq \lceil\frac{n}{2}\rceil$.
\end{conjecture}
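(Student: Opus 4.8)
The plan is to attack the bound through the classical path-and-cycle machinery together with an induction on the even-degree structure of $G$. The starting point is the elementary lower bound: in any path decomposition each path has exactly two endpoints, and at a vertex $v$ the number of paths ending at $v$ is congruent to $d_G(v)$ modulo $2$, so every odd vertex must be an endpoint of at least one path. Summing over all vertices gives $2p(G)\geq n_o(G)$, that is $p(G)\geq\lceil n_o(G)/2\rceil$. Since $n=n_o(G)+n_e(G)$ and $\lceil n/2\rceil\geq\lceil n_o(G)/2\rceil$, the conjectured bound is essentially forced by the odd vertices, and all the difficulty is concentrated in the even vertices. The first concrete step is therefore to invoke the theorem of Lov\'asz \cite{Lovasz1968} that $G$ decomposes into at most $\lfloor n/2\rfloor$ parts, each of which is a path or a cycle; if no cycle appears we are already done, so the task reduces to eliminating the cycles from such a decomposition without inflating the total count beyond $\lceil n/2\rceil$.

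Second, I would break each cycle of a Lov\'asz decomposition by deleting a single edge, producing a path, and then try to reattach the deleted edge by concatenating it onto a neighbouring path at a shared endpoint. Whether this absorption succeeds is governed by the subgraph $G[V_e]$ induced by the even vertices: an even vertex has \emph{spare} parity and can serve as a hinge at which two fragments are glued without creating a new odd endpoint. This suggests organising the argument as an induction on the block structure of $G[V_e]$. In the base case where $G[V_e]$ is a forest, one peels off a pendant even vertex or a leaf block, applies the inductive hypothesis to the smaller graph, and reattaches the removed edges at existing path ends using the parity slack at the even vertices; the number of paths then increases only in step with the newly created odd vertices, which can be charged against the $\lceil\cdot\rceil$ budget.

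The third step, and the genuine crux, is the general case in which $G[V_e]$ itself contains cycles. Here the absorption argument stalls: breaking a cycle whose vertices are even turns those vertices into path endpoints and leaves a loose edge that must be reattached, yet there is no guaranteed reservoir of free endpoints to absorb it, so the naive induction can overshoot the budget by roughly the number of independent cycles sitting inside the even part. Controlling this deficit is exactly the obstruction that keeps Gallai's conjecture open in full generality, and the known partial results succeed precisely because they forbid such cycles (for instance when $G[V_e]$ is a forest, or when its blocks are sufficiently sparse). Accordingly, the honest form of the plan is a reduction: I expect to establish the bound unconditionally whenever $G[V_e]$ is acyclic, and to isolate the cyclic even-subgraph case as the single hard residue. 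This is also what motivates the restriction in the present paper to a cartesian factor $G$ that is unicyclic or bicyclic, since there the number of extra cycles that must be handled is bounded and can be discharged explicitly.
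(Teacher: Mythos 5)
You have not proved the statement, and you could not have: this item is Gallai's conjecture itself, which is an open problem. The paper never proves it either --- it only \emph{verifies} the bound $p(G\Box H)\le \frac{mn}{2}$ for cartesian products whose first factor is unicyclic or bicyclic, building on Lemma \ref{2.1} and Theorems \ref{1.2}--\ref{1.3}. So there is no ``paper proof'' for your attempt to match, and your own text concedes the point: you explicitly leave the case where the even subgraph $G[V_e]$ contains cycles as ``the single hard residue.'' A proof proposal that terminates in the very case that keeps the conjecture open is a reduction, not a proof. Note also that the acyclic case you do claim is not new: $G[V_e]$ being a forest is equivalent to every cycle of $G$ containing an odd vertex, which is exactly Pyber's theorem cited in the introduction, so the unconditional part of your plan reproves a known result at best.

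Beyond the honest gap, the mechanism you sketch for the acyclic case is itself unsound as stated. Starting from a Lov\'asz decomposition into at most $\lfloor n/2\rfloor$ paths and cycles, you delete one edge from each cycle and propose to ``reattach the deleted edge by concatenating it onto a neighbouring path at a shared endpoint.'' Two concrete problems: first, there is no guarantee that any path of the decomposition \emph{ends} at either endpoint of the deleted edge --- the parity observation only tells you that the number of path-ends at a vertex $v$ is congruent to $d_G(v)$ modulo $2$, so at an even vertex that number may be zero, and your ``hinge'' vertex offers no usable slack; second, even when a path does end there, concatenating it with the broken cycle (or with the extra edge) need not yield a path, since the two pieces can intersect internally, producing a walk with a repeated vertex. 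Repairing either failure requires genuinely new structure (this is precisely where Pyber's and Fan's arguments do delicate work with transformations of decompositions), so steps two and three of your plan do not go through as written, independently of the open case.
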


The assertion of Gallai's conjecture is sharp if it is true: If $G$ is a graph in which every vertex has odd degree, then in any path decomposition of $G$ each vertex must be the end vertex of some path, and so at least $\lceil\frac{n}{2}\rceil$ paths are required. In 1968, Lov\'{a}sz \cite{Lovasz1968} proved that every graph of order $n$ can be decomposed into at most $\frac{n}{2}$ paths and cycles. Moreover, he proved that if a connected graph $G$ with $n$ vertices has at most one even vertex, then $p(G)\leq \lceil \frac{n}{2}\rceil$.
In 1996, Pyber \cite{Pyber1996} extended Lov\'{a}sz's result by proving that Conjecture 1.1 holds for graphs whose each cycle contains a vertex of odd degree. In 1980, Donald \cite{Donald1980} showed that if $G$ is allowed to be disconnected, then $p(G)\leq \lfloor\frac{3n}{4}\rfloor$, which was improved to $\lfloor\frac{2n}{3}\rfloor$
independently by Dean and Kouider \cite{Dean2000} and Yan \cite{Yan1998}.

Furthermore, the Conjecture 1.1 was verified for several classes of graphs.  Fan \cite{Fan2005} proved that the conjecture is true if each block of the even subgraph is a triangle-free graph with maximum degree at most
three. Botler and Sambinelli \cite{Botler2020(1)} generalized Fan's result in \cite{Fan2005}.
Also, Botler and
Jim\'{e}nez \cite{Botler2017} showed Conjecture 1.1 is ture for a family of even regular graphs with a high girth condition. Harding and McGuinness\cite{Harding2014} proved that for any simple graph $G$ having girth $g\geq 4$, there is a path decomposition of $G$ having at most $\frac{n_o(G)}{2}+\lfloor(\frac{g+1}{2g})n_e(G)\rfloor$ paths.
Recently, Chu, Fan and Zhou \cite{Chu2022} proved that for any triangle-free graph $G$, $p(G)\leq \lfloor\frac{3n}{5}\rfloor$. More results regarding Conjecture 1.1 can be found in \cite{Bonamy2019, Botler2019, Botler2020(2), Chu2021, Fan2005, Geng2015, Jimenez2017}.
But in general, it is still open.

The {\it cartesian product} of simple graphs $G$ and $H$ is the graph $G\Box H$ whose vertex set is $V(G)\times V(H)$ and whose edge set is the set of all pairs $(u_{1} ,v_{1})(u_{2} ,v_{2})$
such that either $u_{1}u_{2}\in E(G)$ and $v_{1}=v_{2}$, or $v_{1}v_{2}\in E(H)$ and $u_{1}=u_{2}$. Every connected graph has a unique factorization under this graph product \cite{Sabidussi1960}, and this factorization can be found in linear time and space \cite{Imrich2007}. In 2016, Jeevadoss and Muthusamy \cite{Jeevadoss2016} shown that the cartesian product of complete graphs can be decompose to $P_5$ and $C_4$. In 2021, Ezhilarasi and Muthusamy \cite{Ezhilarasi2021} proved that cartesian product of complete graphs can be decompose to $P_5$ and $K_{1,4}$. In 2018, Oyewumi, Akwu and Azer \cite{Oyewumi2018} determined a path decomposition of cartesian product of $P_m$ and $C_n$ with $p(P_m\Box C_n)=n$. In 2023, Chen and Wu \cite{Chen2023} extended the above result further and proved the following theorems.

\begin{theorem} [\cite{Chen2023}] \label{1.2}
Let $m$ be an integer $m\geq 2$ and $H$ be a connected graph of order $n$. If $G=P_{m}\Box H$, then
$p(G)\leq \frac{mn}{2}$.
\end{theorem}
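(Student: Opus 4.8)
The plan is to prove the statement by induction on $m$, but to make the induction go through I would prove a stronger assertion that also records where the paths end on the ``top'' copy of $H$. Write $V(H)=\{v^1,\dots,v^n\}$ and, for $1\le i\le m$, let $H_i$ denote the $i$-th copy of $H$ in $P_m\Box H$ and $(i,v)$ the corresponding vertices; the edges of $P_m\Box H$ split into the horizontal edges inside the copies $H_1,\dots,H_m$ and the vertical edges forming, for each $v$, a path $(1,v)(2,v)\cdots(m,v)$. Since $(i,v)$ has degree $d_H(v)+1$ on the two end copies and $d_H(v)+2$ inside, the natural invariant to carry is:

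($\star$) there is a path decomposition of $P_m\Box H$ with at most $\frac{mn}{2}$ paths in which, for every $v\in V(H)$, the vertex $(m,v)$ is an endpoint of exactly one path when $d_H(v)$ is even and of no path when $d_H(v)$ is odd. This is the smallest endpoint pattern compatible with the parities of the degrees on the top copy, which is exactly what the inductive step needs.

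For the base case $m=2$ (the prism over $H$) I cannot invoke Gallai's conjecture for $H$ itself, since that is open; instead I would start from the decomposition of $H$ into at most $\frac n2$ paths and cycles guaranteed by Lov\'asz's theorem, say $a$ paths and $b$ cycles with $a+b\le\frac n2$. Each path $Q=w_0\cdots w_k$ lifts to a single prism path that runs along $Q$ in $H_1$, crosses one vertical edge, and returns along $Q$ in $H_2$, covering both copies of $Q$ and one vertical edge; each cycle lifts, in the same spirit, to two prism paths covering both of its copies together with two vertical edges. This uses $a+2b$ vertical edges, and since $b\le a+b\le\frac n2$ we get $a+2b\le n$; the remaining $n-(a+2b)$ vertical edges are taken as one-edge paths, for a total of exactly $n$ paths, whose endpoints can be placed on $H_2$ so as to realize ($\star$).

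For the inductive step I pass from $P_m\Box H$ to $P_{m+1}\Box H$ by adding the copy $H_{m+1}$ and the vertical edges $(m,v)(m+1,v)$. For each $v$ with $d_H(v)$ even I use the unique path ending at $(m,v)$ provided by ($\star$) and prolong it across the new vertical edge; for each $v$ with $d_H(v)$ odd, where no path ends at $(m,v)$, I open a new path consisting of that single vertical edge. Either way each vertex $(m+1,v)$ now carries exactly one pending end, and I then incorporate the $e(H)$ edges of $H_{m+1}$ by pairing, at each vertex $(m+1,v)$, its $d_H(v)$ horizontal edge-ends with the one pending end, leaving one free end precisely when $d_H(v)$ is even. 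This restores ($\star$) on the top copy, and an endpoint count gives $|\mathcal P_{m+1}|=|\mathcal P_m|+\frac{n_o(H)}{2}$; since $\frac{n_o(H)}{2}\le\frac n2$ and the base case gives $|\mathcal P_2|\le n$, one obtains $|\mathcal P_m|\le\frac{mn}{2}$ for all $m\ge 2$.

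The main obstacle, in both the base case and the inductive step, is to guarantee that the objects produced by these local pairings are genuine simple paths rather than closed trails: when the pending ends and horizontal edges at the vertices of $H_{m+1}$ are glued together, a careless pairing can create a cycle inside the new copy, and likewise the stitching of the lifted paths and cycles in the prism must avoid closing up. The saving feature is that every vertex of the new copy is entered from below by a distinct vertical edge, which gives each trail an ``escape route'' and should let one break any incipient cycle; making this precise, together with the bookkeeping that places the endpoints so as to maintain ($\star$) and with the degenerate case in the prism where several lifted paths share the same pair of endpoints and thus compete for the same vertical edges, is the delicate part of the argument.
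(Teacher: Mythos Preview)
Your plan diverges sharply from the argument the paper relies on (restated at the opening of Section~3). There no induction on $m$ is performed: one first applies Lemma~\ref{2.1} to $H$, obtaining a path decomposition in which every odd vertex of $H$ is the end of exactly one path and every even vertex of exactly two; these paths are then assembled into odd--odd paths, $\mathcal P$-$r$-trails and $\mathcal P$-$r$-closed-trails, and for each such piece an explicit decomposition of $P_m\boxdot(\text{piece})$ is written down (the families $\mathcal P_{11},\mathcal P_{12},\ldots$). Summation gives $p(P_m\Box H)\le m\,\tfrac{n_o(H)}{2}+n_e(H)\le\tfrac{mn}{2}$ for $m\ge 2$, and the verification that the listed walks are simple paths is by direct inspection. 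The endpoint bookkeeping you are trying to carry through the induction is thus replaced, once and for all, by the structure supplied by Lemma~\ref{2.1}.

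Your scheme, by contrast, has two concrete gaps that you flag but do not close.

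\emph{Base case.} The prism construction you describe does not realise $(\star)$. Take $H=P_3=v_1v_2v_3$. The Lov\'asz decomposition is the single path $v_1v_2v_3$; a lift ``along $Q$ in $H_1$, across one vertical edge, back along $Q$ in $H_2$'' must cross at an end of $Q$, say $v_3$, giving $(1,v_1)(1,v_2)(1,v_3)(2,v_3)(2,v_2)(2,v_1)$, and the leftover verticals at $v_1,v_2$ become one-edge paths. Now $(2,v_1)$ is an endpoint of two of the three paths, whereas $(\star)$ demands zero there since $d_H(v_1)$ is odd. A decomposition of $P_2\Box P_3$ satisfying $(\star)$ exists, but not via this recipe; the claim ``endpoints can be placed on $H_2$ so as to realise $(\star)$'' needs an independent argument.

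\emph{Inductive step.} Even granting $(\star)$ at level $m$, nothing in $(\star)$ forbids a single path $P\in\mathcal P_m$ from having \emph{both} endpoints on layer $m$, at $(m,a)$ and $(m,b)$ with $d_H(a),d_H(b)$ even. After you push both ends up to layer $m{+}1$ and perform your local pairings inside $H_{m+1}$, any trail in $H_{m+1}$ running from $a$ to $b$ glues the two ends of $P$ into a closed walk. The ``escape route'' observation---one vertical stub at every vertex of $H_{m+1}$---ensures only that each maximal trail produced by local pairing can be made \emph{open}; it does not prevent repeated vertices inside $H_{m+1}$, nor does it prevent the two pendants at its ends from belonging to the same path below. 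Repairing this would force you to strengthen $(\star)$ (e.g.\ forbid both ends of any path on the top layer and control revisits inside the new copy), and proving that strengthened invariant is precisely the missing content; the paper's route via Lemma~\ref{2.1} sidesteps the issue entirely.
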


\begin{theorem} [\cite{Chen2023}] \label{1.3}
Let $G$ be a connected graph of order $m\geq 2$ and $H$ be a connected graph of order $n$. If $p(G)=\frac{n_o(G)}{2}$, then $p(G\Box H)\leq \frac{mn}{2}$.
\end{theorem}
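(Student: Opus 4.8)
The plan is to build a path decomposition of $G\Box H$ directly from an optimal path decomposition of $G$. Since $p(G)=n_o(G)/2$, we may fix a decomposition $\mathcal{P}_G=\{Q_1,\dots,Q_k\}$ with $k=n_o(G)/2$; in such a decomposition the total number of path ends is $2k=n_o(G)$, so every odd vertex of $G$ is an end of exactly one $Q_j$ and every even vertex is internal to each path through it. I would split the edges of $G\Box H$ into \emph{vertical fibers} — for each edge $e\in E(G)$ its $n$ copies $\{(u,h)(v,h):h\in V(H)\}$, which are partitioned by the $Q_j$ — and \emph{horizontal fibers} $H_g$ (a copy of $H$) for each $g\in V(G)$. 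The target rewrites as $\frac{mn}{2}=nk+\frac{n}{2}\,n_e(G)$, separating a skeleton budget $nk$ from a slack $\frac{n}{2}n_e(G)$; the whole argument is then an accounting that keeps the vertical skeleton within $nk$ paths and pays for the horizontal fibers out of the slack together with merges into the skeleton.

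First I would lift $\mathcal{P}_G$ layerwise: copying each $Q_j$ into every $H$-layer produces $nk$ vertical paths covering all vertical fibers. The key structural point is that their $2nk$ free ends sit precisely at the vertices $(g,h)$ with $g$ odd, each such vertex carrying exactly one end, because the $Q_j$-ends are exactly the odd vertices of $G$. For an odd vertex $g$ the horizontal fiber $H_g$ can then be absorbed at little or no cost: decompose $H$ into at most $n/2$ paths and cycles (by Lov\'asz's theorem \cite{Lovasz1968}), route each piece inside $H_g$, and glue its ends to the waiting free ends at the corresponding vertices $(g,h)$, each gluing merging two vertical paths into one. Thus odd fibers are cleared essentially for free, reducing rather than increasing the skeleton count, subject to each fiber vertex being asked to host at most the one free end it owns.

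The main obstacle is the even fibers. If $g$ is even then $(g,h)$ is internal to every vertical path through column $g$, so no free ends are available to absorb $H_g$. Here I would cover $H_g$ by a path-and-cycle decomposition of $H$ into at most $n/2$ pieces and charge the path pieces to the slack $\frac{n}{2}n_e(G)$, which allots exactly $n/2$ paths per even fiber; the genuinely delicate part is the cyclic pieces, since turning a cycle into its own path would cost an extra path and could break the budget. To handle a cycle $C$ inside $H_g$ I would perform local surgery on a vertical path passing through a vertex $(g,h^{*})$ of $C$, rerouting it so that it traverses the edges of $C$ and re-emerges along a neighbouring column, so that $C$ is swallowed by the skeleton instead of spending a fresh path. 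Verifying that all such reroutings can be carried out simultaneously while every resulting piece remains an edge-disjoint \emph{path}, and that no vertex is forced to serve as an endpoint more often than it can, is exactly where the care is needed, and I expect this to be the technical heart of the proof.

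Finally I would tally: the vertical skeleton contributes at most $nk$ paths, the odd fibers contribute none beyond merges, and the even fibers contribute at most $\frac{n}{2}$ each, for a total of at most $nk+\frac{n}{2}n_e(G)=\frac{mn}{2}$. It is worth noting that the case $G=P_m$ (where $k=1$) is precisely Theorem \ref{1.2}, so I would expect this construction to specialize to, and be guided by, the proof of that theorem; an alternative organization is induction on $k$, peeling off one path $Q_j$ together with the horizontal fibers first assigned to it and applying the $P_m$-case to the resulting grid, the recurring subtlety again being the bookkeeping of which horizontal fiber is charged to which path.
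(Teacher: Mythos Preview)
Theorem~\ref{1.3} is quoted from \cite{Chen2023} rather than reproved here, but the argument is on display in this paper as (the cycle-free case of) Theorem~\ref{4.3}, and it is your \emph{alternative} organisation, not your main one. From $p(G)=n_o(G)/2$ one fixes an optimal decomposition $\{Q_1,\dots,Q_k\}$; every odd vertex of $G$ ends exactly one $Q_j$ and no even vertex is an end. Each $g\in V(G)$ is then declared \emph{real} on exactly one $Q_j$ (the path it ends if $g$ is odd, any one path through it if $g$ is even) and \emph{virtual} on the others. The $H$-fiber at $g$ is packaged with the unique $Q_j$ on which $g$ is real, so that $E(G\Box H)=\bigcup_j E(Q_j\boxdot H)$; by the subdivision lemma (Lemma~\ref{1}) one has $p(Q_j\boxdot H)=p(P_{s_j}\Box H)\le s_jn/2$ with $s_j$ the number of real vertices on $Q_j$ (Theorem~\ref{1.2}), and $\sum_j s_j=m$ finishes. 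The ``bookkeeping of which horizontal fiber is charged to which path'' that you flag as the recurring subtlety is thus dissolved by the single rule \emph{one real occurrence per vertex of $G$}; there is no induction and no merging of skeleton paths at all.

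Your primary gluing plan, by contrast, has a genuine gap beyond the even-fiber cycle surgery you already acknowledge: gluings at different odd columns interact. Take $G=Q_1=g_1g_2g_3$ and $H=h_1h_2$. After merging the two layer copies of $Q_1$ via the edge $(g_1,h_1)(g_1,h_2)$, the free ends of the resulting single path sit at $(g_3,h_1)$ and $(g_3,h_2)$, so the ``merge'' at $g_3$ closes it into a cycle. More generally, once skeleton paths have been fused at one odd column, the pattern of free ends seen from the next odd column is no longer ``one independent end per vertex'', and the promise that odd fibers are cleared essentially for free cannot be kept column by column with an arbitrary Lov\'asz decomposition of $H$. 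The virtual-real device bypasses this interference entirely: distinct $H$-fibers never act on the same skeleton path, because each fiber lives inside its own grid $P_{s_j}\Box H$, where Theorem~\ref{1.2} already supplies the finished construction.
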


In addition, for other interesting results on cartesian product of graphs can be found in \cite{Anderson2022, Hammack2011, Imrich2008, Kaul2023, Rall2023}.

A {\it unicyclic graph} is a connected graph with $n$ vertices and $n$ edges. A {\it bicyclic graph} is a connected graph with $n$ vertices and $n+1$ edges. We refer to \cite{Azari2023, Barik2023, Eliasi2022, Li2023, Liu2023} for some recent results on unicyclic and bicyclic graphs. In this paper, we prove that Gallai's path decomposition conjecture holds for the cartesian product of a general graph and a unicyclic graph, a general graph and a bicyclic graph, respectively.



\section{\large Notation}

In this section, we will introduce some notations to be used in this paper. Let $u,v\in V(G)$ and $P_{u,v}$ be a path of $G$ with ends $u$ and $v$. We call $u$ is an {\it odd end vertex} of $P_{u,v}$ if $u$ is odd in $G$, otherwise, $u$ is an {\it even end vertex} of $P_{u,v}$. If both $u$ and $v$ are odd end vertices in $G$, we call $P_{u,v}$ is an {\it odd-odd path}. If both $u$ and $v$ are even end vertices in $G$, then $P_{u,v}$ is an {\it even-even path}. If the degree of $u$ and $v$ have different parity, we call $P_{u,v}$ is an {\it odd-even path}.

A sequence of vertices and edges $W := v_1 e_1 v_2 ...v_{l} e_l v_{l+1}$ is called a {\it trail} if $e_i=v_{i}v_{i+1}\in E(W)$ for each $i\in \{1,2,\cdots,l\}$ and $e_i\neq e_j$ if $i\neq j$. If $v_1=v_{l+1}$, then $W$ is a {\it closed trail}. A trail $W := v_1 P_{v_1,v_2} v_2 ...v_{r} P_{v_r,v_{r+1}} v_{r+1}$ is called a {\it$\mathcal{P}$-$r$-trail} if $W$ is the union of $r$ edge disjoint paths $P_{v_1,v_2} ... P_{v_r,v_{r+1}}$ of $\mathcal{P}$, and $v_i\neq v_j$ if $i\neq j$. If $v_1=v_{r+1}$, then $W$ is {\it $\mathcal{P}$-$r$-closed-trail}.

Let $P_m$ be a path with $V(P_m)=\{u_1,u_2,\cdots,u_m\}$. Some internal vertices of $P_m$ are called {\it virtual}, and the remaining ones are called {\it real}. Note that the ends of the path must be real. Such a path is called a {\it virtual-real} path. Similarly, $C_m$ be a cycle with $V(C_m)=\{u_1,u_2,\cdots,u_m\}$. Some vertices of $C_m$ are called {\it virtual}, and the remaining ones are called {\it real}. Note that the number of real vertices on $C_m$ is at least three. Such a cycle is called a {\it virtual-real} cycle. The cartesian product of a virtual-real path $P_m$ (or virtual-real cycle $C_m$) and a graph $H$ is defined as the graph with $V(P_m\boxdot H)=V(P_m)\times V(H)$ (or $V(C_m\boxdot H)=V(C_m)\times V(H)$), in which for any $v\in V(H)$, $\{(u_i, v):\ 1\leq i\leq m\}$ induces $P_m$ (or $C_m$), for each $i\in\{1, \ldots, m\}$, $\{u_i\}\times V(H)$ induces a graph isomorphic to $H$ if $u_i$ is real, and otherwise, $\{u_i\}\times V(H)$ is an independent set.

\vspace{2mm} The following lemma presented a path decomposition for a graph and was proved in \cite{Chen2023}. We restate it as:

\begin{lemma} [\cite{Chen2023}] \label{2.1} For any connected graph $H$, there exists a path decomposition $\mathcal{P}(H)$ of the edges of $H$, such that:

(1) each odd vertex is the end vertex of exactly one path of $\mathcal{P}(H)$;

(2) each even vertex is the end vertex of exactly two path of $\mathcal{P}(H)$;

(3) $|\mathcal{P}(H)|=\frac{n_o(H)}{2}+n_e(H)$.
\end{lemma}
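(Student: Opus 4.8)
The plan is to produce the prescribed endpoint multiplicities by an Eulerian argument and then to upgrade the resulting trails into paths. First I would embed $H$ in an Eulerian multigraph. Introduce a new apex vertex $z$, join $z$ to each odd vertex of $H$ by a single edge, and join $z$ to each even vertex of $H$ by two parallel edges; denote the resulting multigraph by $H^{*}$. Every vertex of $H$ now has even degree in $H^{*}$, since an odd vertex gains one incident edge and an even vertex gains two, and $\deg_{H^{*}}(z)=n_o(H)+2n_e(H)$ is even because $n_o(H)$ is always even. As $H$ is connected and $z$ is adjacent to every vertex of $H$, the multigraph $H^{*}$ is connected and even, and therefore has an Euler circuit.

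Next I would cut this Euler circuit open at $z$. Regarded as a cyclic sequence, the circuit meets $z$ exactly $\frac{1}{2}\deg_{H^{*}}(z)=\frac{n_o(H)}{2}+n_e(H)$ times, and deleting $z$ at each of its occurrences splits the circuit into exactly $k:=\frac{n_o(H)}{2}+n_e(H)$ edge-disjoint trails whose union is $H$. Each edge $zv$ contributes the vertex $v$ as the end of one of these trails exactly once, so every odd vertex occurs as a trail-end exactly once and every even vertex exactly twice; this yields the analogues of (1), (2), (3) for the trail system. To avoid degenerate empty pieces I would choose the Euler circuit so that the two copies of each edge $zv$ are never traversed consecutively, which can be arranged since every even vertex has degree at least two in $H$.

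The remaining and, I expect, principal obstacle is that these pieces are trails rather than paths: a trail may revisit a vertex, while the lemma requires genuine paths, and the repair is constrained because it must preserve both $k$ and every endpoint multiplicity, so one cannot simply cut out a repeated-vertex loop (this would leave an endpoint-free closed trail). I would resolve this by induction on the total number of self-intersections of the trail system. If a trail passes through a vertex $v$ twice, then, since $v$ is the end of $t(v)\in\{1,2\}$ of the trails, at least one free trail-end is available at $v$; using it to reroute the transitions at $v$ lets the offending trail avoid its second visit to $v$ while leaving the multiset of free ends at every vertex unchanged. Each such exchange strictly lowers the number of self-intersections without altering $k$ or any $t(\cdot)$, and after finitely many exchanges every trail is a path, giving the desired decomposition $\mathcal{P}(H)$. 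Verifying that this local rerouting is always available at a repeated vertex and that it strictly decreases the defect is the crux; as a fallback one could instead induct on $e(H)$, peeling off one suitably chosen path at a time with Lov\'asz's theorem on graphs having at most one even vertex \cite{Lovasz1968} as the base case.
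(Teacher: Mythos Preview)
The paper does not prove this lemma: it is quoted from the companion manuscript \cite{Chen2023} and used as a black box, so there is no in-paper argument against which to compare your attempt.

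On the merits of your sketch, the Eulerian set-up is sound and does yield a trail system with the correct endpoint multiplicities and exactly $\tfrac{n_o(H)}{2}+n_e(H)$ pieces. The gap is exactly where you flag it, and it is a real one. Your assertion that a local reroute at a repeated vertex ``strictly lowers the number of self-intersections'' is not justified, and the natural move does not lower it: if $T=A\,v\,B\,v\,C$ repeats $v$ internally and $T'$ is a distinct trail ending at $v$, then re-pairing the transitions at $v$ so that $T$ becomes $A\,v\,C$ forces the closed sub-walk $v\,B\,v$ onto $T'$, and the total number of excess visits to $v$ (indeed to every vertex) is unchanged---the repeat has merely migrated. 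With one free end and four $T$-edges meeting at $v$, no re-pairing at $v$ alone can make both resulting trails simple there. Something further is needed, for instance rerouting at an \emph{interior} vertex of the cycle $vBv$ so that the two visits of $T$ to $v$ land in different trails, together with a potential that genuinely decreases under that move. A cleaner route is to attach one pendant edge to every even vertex so that the enlarged graph has all vertices odd, invoke Lov\'asz's theorem there (which then gives $\tfrac{n_o(H)}{2}+n_e(H)$ \emph{paths}, not merely trails, since every vertex must be an endpoint), and delete the pendants; one still has to argue that no pendant edge forms a path by itself, but that is a local fix. Your fallback of peeling off one path and inducting on $e(H)$ faces the same bookkeeping obstacle: you must exhibit a path whose removal preserves the required endpoint pattern on every component of the remainder.
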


Let $\mathcal{P}(H)$ be a path decomposition of $H$ as given in the Lemma \ref{2.1}. Now the $\mathcal{P}(H)$ can be classified into odd-odd, even-even, odd-even (or even-odd) paths respectively.
We denote the number of odd end vertices of odd-odd paths in $\mathcal{P}(H)$ as $n_{o}^{1}(H)$, and the number of odd end vertices of odd-even paths as $n_{o}^{2}(H)$. Clearly, $n_{o}^{1}(H)+n_{o}^{2}(H)=n_o(H)$.
For any even-even path, it is either on a $\mathcal{P}$-$r$-trail or on a $\mathcal{P}$-$r$-closed-trail. We denote the number of even end vertices of even-even paths on the $\mathcal{P}$-$r$-trail in $\mathcal{P}(H)$ as $n_{e}^{1}(H)$, and the number of even end vertices of even-even paths on the $\mathcal{P}$-$r$-closed-trail in $\mathcal{P}(H)$ as $n_{e}^{2}(H)$. Clearly, $n_{e}^{1}(H)+n_{e}^{2}(H)=n_e(H)$.

Based on the above facts, it follows that $H$ can be decomposed into some odd-odd paths, $\mathcal{P}$-$r$-trail and $\mathcal{P}$-$r$-closed-trail. Let $v$ be any vertex in $H$. It is assigned to be real on the path in $\mathcal{P}(H)$ with $v$ as an end vertex, and is assigned to virtual for the remaining paths containing it. Then $2t_1+\sum_{i=1}^{t_2}(r_i+1)+\sum_{j=1}^{t_3}r_j=n(H)$, where $t_1=\frac{n_{o}^{1}(H)}{2}$, $t_2=\frac{n_{o}^{2}(H)}{2}$, and $t_3$ is the number of $\mathcal{P}$-$r$-closed-trails of $\mathcal{P}(H)$. Two examples are shown in Fig. 1. Let $H_1$ be the union of odd-odd paths of $\mathcal{P}(H)$, $H_2$ be the union of $\mathcal{P}$-$r$-trails of $\mathcal{P}(H)$ and $H_3$ be the union of $\mathcal{P}$-$r$-closed-trail of $\mathcal{P}(H)$. It obvious that $E(H)=\bigcup_{i=1}^{3}E(H_i)$. Hence, $E(G\Box H)=\bigcup_{i=1}^{3}E(G\boxdot H_i)$.

\begin{figure}[h]
\begin{center}
\includegraphics[width=13cm]{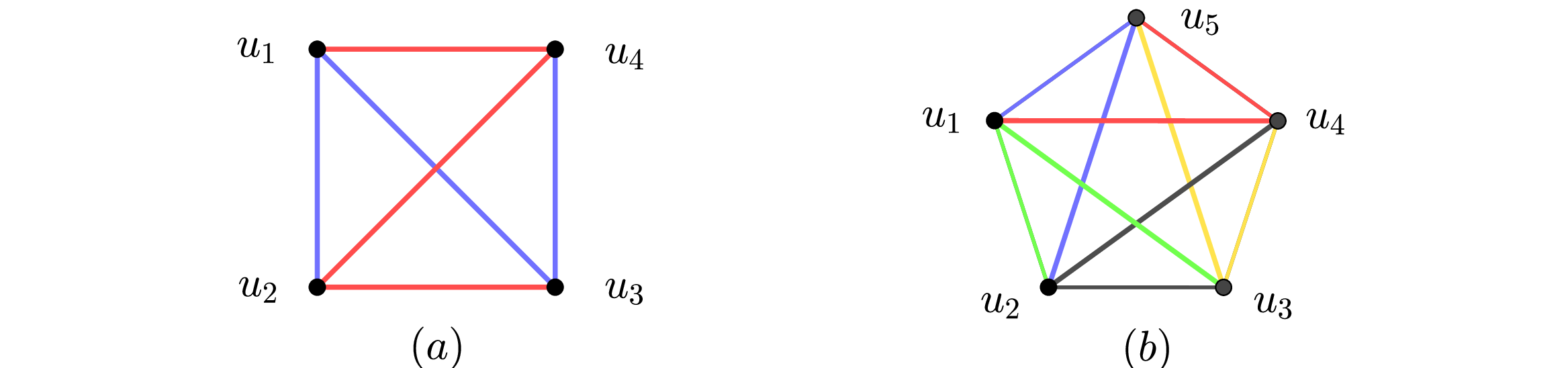}\\
\end{center}
\vspace{2mm}\footnotesize{Fig. 1. $(a)$ represents a path decomposition of $K_4$ that satisfies Lemma 2.1, where $P_{u_1,u_3}=u_1u_4u_2u_3$ that $u_1$ and $u_3$ are real and $u_2$ and $u_4$ are virtual, and $P_{u_2,u_4}=u_2u_1u_3u_4$ that $u_2$ and $u_4$ are real and $u_1$ and $u_3$ is virtual. $(b)$ represents a path decomposition of $K_5$ that satisfies Lemma 2.1, for each $i\in \{1,2,3,4,5\}$, $P_{u_i,u_{i+1}}=u_iu_{i-1}u_{i+1}$ where $u_i$ and $u_{i+1}$ are real and $u_{i-1}$ is virtual and the subscripts are taken modular $5$.}
\end{figure}

In this paper we also use the following lemma.

\begin{lemma} [\cite{Chen2023}] \label{1}
If $G'$ be a graph obtained from a graph $G$ by subdividing an edge, then $p(G')=p(G)$.
\end{lemma}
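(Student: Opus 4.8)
The plan is to establish the two inequalities $p(G')\le p(G)$ and $p(G)\le p(G')$ separately, after fixing notation: let $uv$ be the edge of $G$ that is subdivided and let $w$ be the resulting new vertex of $G'$, so that $w$ has degree $2$ in $G'$ with incident edges $uw$ and $wv$, while every other vertex retains its degree.

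For $p(G')\le p(G)$, I would take a path decomposition $\mathcal{P}$ of $G$ with $|\mathcal{P}|=p(G)$. The edge $uv$ belongs to exactly one path $P\in\mathcal{P}$, and replacing $uv$ inside $P$ by the two-edge segment $u\,w\,v$ converts $P$ into a path of $G'$ while leaving the other paths unchanged. This produces a path decomposition of $G'$ with $p(G)$ paths, so $p(G')\le p(G)$; this direction requires no case analysis.

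For the reverse inequality I would start from an optimal path decomposition $\mathcal{P}'$ of $G'$ and inspect the two edges $uw,wv$ at $w$. If they lie on a common path $P'$, then, since $d_{G'}(w)=2$, the vertex $w$ is internal to $P'$ and $P'$ contains the segment $u\,w\,v$; suppressing $w$ (that is, replacing $u\,w\,v$ by the edge $uv$) turns $P'$ into a path of $G$ and gives a decomposition of $G$ with the same number $p(G')$ of paths. If instead $uw$ and $wv$ lie on two distinct paths $P_1'$ and $P_2'$, then $w$ is an end of each, so $P_1'$ terminates $\ldots u\,w$ and $P_2'$ begins $w\,v\ldots$, and I would then reconnect the edges of $P_1'$ and $P_2'$ through the edge $uv$ of $G$.

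The main obstacle is exactly this last case, because the naive concatenation of $P_1'$ and $P_2'$ through $uv$ is a genuine path only when $P_1'$ and $P_2'$ meet solely at $w$. I expect to argue as follows: if $P_1'$ and $P_2'$ shared no vertex other than $w$, their union through $w$ would already be a single path of $G'$, and replacing $P_1',P_2'$ by it would decompose $G'$ into $p(G')-1$ paths, contradicting the optimality of $\mathcal{P}'$. Hence any overlap occurs at some common vertex $x\ne w$, and a local rerouting that swaps the tails of $P_1'$ and $P_2'$ at their common vertices lets me realize the edge set $\bigl(E(P_1')\cup E(P_2')\bigr)\setminus\{uw,wv\}\cup\{uv\}$ as a union of at most two paths of $G$. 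Together with the untouched paths of $\mathcal{P}'$ this yields a decomposition of $G$ with at most $p(G')$ paths, so $p(G)\le p(G')$. Combining the two inequalities gives $p(G')=p(G)$.
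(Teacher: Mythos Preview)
The paper does not prove this lemma; it merely quotes it from \cite{Chen2023}, so there is no in-paper argument to compare your proposal against.

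Your proof of $p(G')\le p(G)$ is correct, and Case~A of the converse is fine. The gap is in Case~B: after observing that $P_1'$ and $P_2'$ must share some vertex $x\ne w$, you assert that ``a local rerouting that swaps the tails of $P_1'$ and $P_2'$ at their common vertices'' turns $\bigl(E(P_1')\cup E(P_2')\bigr)\setminus\{uw,wv\}\cup\{uv\}$ into at most two paths of $G$. This is not justified, and in fact it \emph{cannot} be justified, because the inequality $p(G)\le p(G')$ is false in general. Take $G=K_5$ minus the edge $\{1,3\}$. Then $|E(G)|=9$ while every path in $G$ has at most four edges, so $p(G)\ge\lceil 9/4\rceil=3$, and Lov\'asz's bound gives $p(G)=3$. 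Now subdivide the edge $\{2,5\}$ by a new vertex $w$ to obtain $G'$. The two Hamiltonian paths
\[
P_1=1\,\text{--}\,2\,\text{--}\,3\,\text{--}\,4\,\text{--}\,5\,\text{--}\,w,
\qquad
P_2=w\,\text{--}\,2\,\text{--}\,4\,\text{--}\,1\,\text{--}\,5\,\text{--}\,3
\]
are edge-disjoint and together cover all ten edges of $G'$, so $p(G')=2<3=p(G)$. Thus the lemma, as stated, is incorrect; your rerouting step fails here precisely because the graph $H=\bigl(E(P_1')\cup E(P_2')\bigr)\setminus\{uw,wv\}\cup\{uv\}$ is $K_5-\{1,3\}$ itself, which needs three paths.

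For what it is worth, every place the present paper invokes this lemma (e.g.\ in the proof of Theorem~4.3) only requires the inequality $p(G')\le p(G)$, which is exactly the direction you proved cleanly; so the paper's applications survive even though the two-sided statement does not.
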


\section{\large Some auxiliary results}

\vspace{2mm} In this section we will use the path decomposition of cartesian product of $P_m$ and a connected graph $H$ given in \cite{Chen2023}, which is as follows.

Let $P_m:=u_1u_2\cdots u_m$ and $V(H)=\{v_1,v_2,\cdots,v_{n}\}$. For the graph $G=P_m\Box H$, let $U_i=\{(u_i,v_j): j\in \{ 1,2,\cdots,n\}\}$ where $i\in\{1,2,\cdots,m\}$. Let $\mathcal{P}(H)$ be a path decomposition of $H$ as described in Lemma \ref{2.1} and $\mathcal{P}_i=\{P^i: V(P^i)=\{(u_i,v): v\in V(P), P\in \mathcal{P}(H)\}\}$, where $i\in\{1,2,\cdots,m\}$.

\vspace{2mm} For an odd-odd path $P_{y_1,y_2}$ of $\mathcal{P}(H)$, $P_{m}\boxdot P_{y_1,y_2}$ can be decomposed into $m$ paths as follows:

\vspace{2mm} If $m$ is even, then

\vspace{1.5mm} $\mathcal{P}_{11}=\{\bigcup_{i=1}^{m} P_{y_1,y_2}^i\} \cup \{(u_i,y_1)(u_{i+1},y_1): i \in \{2,4, \cdots, m-2 \} \} \cup \{(u_i,y_2)(u_{i+1},y_2): i \in \{ 1,3, \cdots, m-1\}\}$,

$\mathcal{P}_{12}=\{(u_i,y_1)(u_{i+1},y_1)\}$ for each $i \in \{1,3, \cdots, m-1\}$,

$\mathcal{P}_{13}=\{(u_i,y_2)(u_{i+1},y_2)\}$ for each $i \in \{2,4, \cdots, m-2\}$.

\vspace{2mm} If $m$ is odd, then

\vspace{1.5mm} $\mathcal{P}_{11}=\{\bigcup_{i=1}^{m} P_{y_1,y_2}^i\} \cup \{(u_i,y_1)(u_{i+1},y_1): i \in \{2,4, \cdots, m-1 \} \} \cup \{(u_i,y_2)(u_{i+1},y_2): i\in \{1,3, \cdots, m-2\}\}$,

$\mathcal{P}_{12}=\{(u_i,y_1)(u_{i+1},y_1)\}$ for each $i \in \{1,3, \cdots, m-2\}$,

$\mathcal{P}_{13}=\{(u_i,y_2)(u_{i+1},y_2)\}$ for each $ i \in \{2,4, \cdots, m-1\}$.

\vspace{2mm} For a $\mathcal{P}$-$r$-trail $W_1=y_1P_{y_1,y_2}\cdots y_{r}P_{y_{r}y_{r+1}}y_{r+1}$ of $\mathcal{P}(H)$, $P_{m}\boxdot W_1$ can be decomposed into $m+r-1$ paths as follows:

\vspace{2mm} If $m$ is even

\vspace{1.5mm} $\mathcal{P}_{21}=\{\bigcup_{i=1}^{m} P_{y_j,y_{j+1}}^i\} \cup \{(u_i,y_j)(u_{i+1},y_j): i\in \{2,4, \cdots, m-2\}\} \cup \{(u_i,y_{j+1})(u_{i+1},y_{j+1}): i\in \{1,3, \cdots, m-1\}\}$ for each $ j\in\{1, \ldots, r\}$,

$\mathcal{P}_{22}=\{(u_i,y_1)(u_{i+1},y_1)\}$ for each $ i \in \{1, 3, \cdots, m-1\}$,

$\mathcal{P}_{23}=\{(u_i,y_{r+1})(u_{i+1},y_{r+1})\}$ for each $ i \in \{2, 4, \cdots, m-2\}$.

\vspace{2mm} If $m$ is odd

\vspace{1.5mm} $\mathcal{P}_{21}=\{\bigcup_{i=1}^{m} P_{y_j,y_{j+1}}^i\} \cup \{(u_i,y_j)(u_{i+1},y_j): i\in \{2,4, \cdots, m-1\}\} \cup \{(u_i,y_{j+1})(u_{i+1},y_{j+1}): i\in \{1,3, \cdots, m-2\}\}$ for each $j \in \{1, \ldots, r \}$,

$\mathcal{P}_{22}=\{(u_i,y_1)(u_{i+1},y_1)\}$ for each $ i \in \{1, 3, \cdots, m-2\}$,

$\mathcal{P}_{23}=\{(u_i,y_{r+1})(u_{i+1},y_{r+1})\}$ for each $ i \in \{2, 4, \cdots, m-1\}$.

\vspace{2mm} For a $\mathcal{P}$-$r$-closed-trail $W_2=y_1P_{y_1,y_2}\cdots y_{r}P_{y_{r}y_{1}}y_{1}$ of $\mathcal{P}(H)$, $P_{m}\boxdot W_2$ can be decomposed into $r$ paths as follows:

\vspace{2mm}If $m$ is even, then

\vspace{1.5mm} $\mathcal{P}_{31}=\{\bigcup_{i=1}^{m} P_{y_j,y_{j+1}}^i\} \cup \{(u_i,y_j)(u_{i+1},y_j): i\in \{2,4, \cdots, m-2\}\} \cup \{(u_i,y_{j+1})(u_{i+1},y_{j+1}): i\in \{1,3, \cdots, m-1\}\}$ for each $j\in\{1, \ldots, r\}$.

\vspace{2mm}If $m$ is odd, then

\vspace{1.5mm} $\mathcal{P}_{31}=\{\bigcup_{i=1}^{m} P_{y_j,y_{j+1}}^i\} \cup \{(u_i,y_j)(u_{i+1},y_j): i\in \{2,4, \cdots, m-1 \}\} \cup \{(u_i,y_{j+1})(u_{i+1},y_{j+1}): i\in \{1,3, \cdots, m-2\}\}$ for each $j\in\{1, \cdots, r\}$.

\vspace{2mm}To conclude the above, $G$ has a path decomposition $\mathcal{P}(G)$ with
\begin{eqnarray*}
p(G)&\leq&p(P_m\boxdot H_1)+p(P_m\boxdot H_2)+p(P_m\boxdot H_3)\\
&\leq&m\frac{n_{o}^{1}(H)}{2}+m\frac{n_{o}^{2}(H)}{2}+n_e^{1}(H)+n_e^{2}(H)\\
&=& m\frac{n_o(H)}{2}+n_e(H).
\end{eqnarray*}

\begin{center}
\includegraphics[width=13cm]{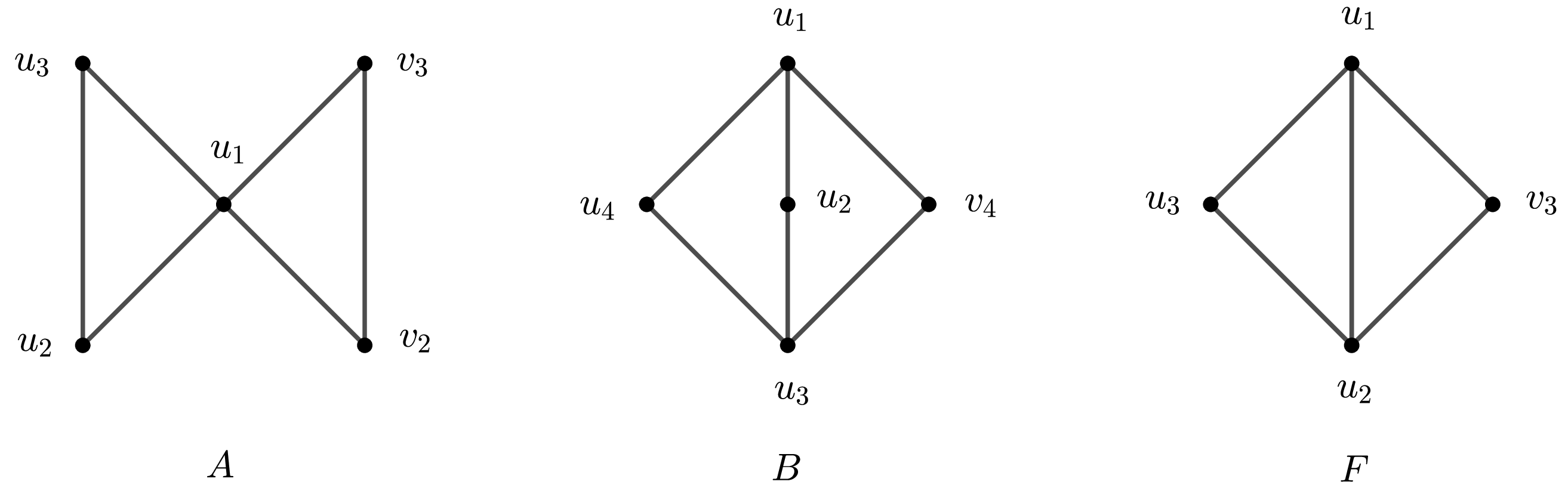}\\
\centerline{Fig. 2 }
\end{center}

\vspace{3mm} Let $\mathcal{A}$ be the family of all graphs, which are obtained from $A$ (in Fig. 2) by identifying a vertex $x\in \{u_2, u_3, v_2, v_3\}$ with an end vertex of a separate path $P$. Let $\mathcal{B}$ be the family of all graphs, which are obtained from $B$ (in Fig. 2) by identifying a vertex $x\in \{u_4, v_4\}$ with an end vertex of a separate path $P$. Let $\mathcal{F}$ be the family of all graphs, which are obtained from $F$ (in Fig. 2) by identifying a vertex $x\in \{u_3, v_3\}$ with an end vertex of a separate path $P$. Let $\mathcal{G}$ be the family of all graphs, which are obtained from a path $P$ by identifying each end vertices of $P$ with a vertex of a $C_3$. By a similar argument as in above, we can obtain the following lemmas.

\begin{lemma}\label{3.1}
Let $G$ be a connected graph of order $m$ and $H$ be a connected graph of order $n$. If $G \in \mathcal{A}$, then $p(G\Box H)\leq \frac{mn}{2}$.
\end{lemma}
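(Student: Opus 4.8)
The plan is to follow the template established above for $P_m\boxdot H$, but with the path $P_m$ replaced by the graph $G\in\mathcal A$. First I would fix a path decomposition $\mathcal P(H)$ of $H$ as in Lemma \ref{2.1}, which writes $H$ as the union $H_1\cup H_2\cup H_3$ of its odd--odd paths, its $\mathcal P$-$r$-trails and its $\mathcal P$-$r$-closed-trails, so that $E(G\Box H)=\bigcup_{i=1}^{3}E(G\boxdot H_i)$. It then suffices to decompose $G\boxdot Q$ for each individual piece $Q$ and to add up the numbers of paths used. The natural target is uniform: I aim to spend at most $\frac{m}{2}$ paths per \emph{real} vertex of $Q$. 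Since by Lemma \ref{2.1} every vertex of $H$ is an end vertex of exactly one path when it is odd and of two consecutive paths of a single trail or closed-trail when it is even, each vertex of $H$ is real on exactly one piece; hence hitting the target on every piece sums to $\frac{m}{2}\cdot n=\frac{mn}{2}$.

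The key observation for two of the three piece types is that internal vertices of a piece carry no $G$-direction edges, so $G\boxdot Q$ is merely a subdivision of an ordinary cartesian product. If $Q=P_{y_1,y_2}$ is an odd--odd path then its only real vertices are $y_1,y_2$, and $G\boxdot Q$ is exactly $G\Box K_2$ with its $m$ rungs subdivided; by Lemma \ref{1} subdivision leaves the path number unchanged, and by Theorem \ref{1.2} applied to $K_2\Box G$ we obtain $p(G\boxdot Q)=p(G\Box K_2)\le m$, i.e. $\frac{m}{2}$ per real vertex. Likewise, if $Q$ is a $\mathcal P$-$r$-trail with real vertices $y_1,\dots,y_{r+1}$, then $G\boxdot Q$ is $G\Box P_{r+1}$ with subdivided rungs, so $p(G\boxdot Q)=p(P_{r+1}\Box G)\le\frac{(r+1)m}{2}$, again by Lemma \ref{1} and Theorem \ref{1.2}; this too is $\frac{m}{2}$ per real vertex. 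Thus these two cases consume precisely their share of the budget and require no special structure of $G$.

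The remaining, and genuinely hard, case is a $\mathcal P$-$r$-closed-trail $Q$: its $r$ real vertices are cyclically arranged, so $G\boxdot Q$ is a subdivision of $G\Box C_r$, and I must show $p(G\Box C_r)\le\frac{mr}{2}$. This bound lies beyond Theorem \ref{1.2} (which only decomposes products with a path), and establishing it for an arbitrary connected $G$ would amount to proving Gallai's conjecture for every product $G\Box C_r$; here it must instead be extracted from the special structure of $A$, which is also the reason $G$ escapes Theorem \ref{1.3}. I would prove it by an explicit decomposition in the spirit of the family $\mathcal P_{31}$: handle the cycle contained in $A$ jointly with the $C_r$-direction by a path-system that winds around both cycles of the torus $C\Box C_r$ sitting inside $G\Box C_r$, and then absorb the few remaining edges of $A$ and the whole pendant path $P$ into that system by concatenating along the fibre over the attachment vertex $x$, rather than counting them separately. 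The delicate point is purely arithmetical: since $(v,c)$ has degree $d_G(v)+2$, we have $n_o(G\Box C_r)=r\,n_o(G)$ and hence $p\ge\frac{r\,n_o(G)}{2}$, leaving only a surplus of $\frac{r\,n_e(G)}{2}$ paths. The main obstacle is therefore to verify, across both parities of $m$ and $r$ and for each admissible position of $x$, that the even vertices produced by the cycle of $A$ can always be paired off within this surplus, so that the count never exceeds $\frac{mr}{2}$. Once this closed-trail bound is secured, summing the three contributions yields $p(G\Box H)\le\frac{mn}{2}$.
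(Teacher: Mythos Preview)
Your budgeting strategy ($\tfrac{m}{2}$ paths per real vertex of each piece of $\mathcal P(H)$) is exactly the paper's, and your treatment of the odd--odd pieces via Lemma~\ref{1} and Theorem~\ref{1.2} is correct, since an odd--odd path $P_{y_1,y_2}$ really is a simple path and $G\boxdot P_{y_1,y_2}$ really is a subdivision of $G\Box K_2$. But the analogous claim for a $\mathcal P$-$r$-trail $W_1$ is not justified. The definition of a $\mathcal P$-$r$-trail only requires the \emph{junction} vertices $y_1,\dots,y_{r+1}$ to be distinct; the trail may well revisit a vertex $v$ either as an internal (virtual) vertex of two different segments $P_{y_j,y_{j+1}}$ and $P_{y_k,y_{k+1}}$, or as some $y_i$ while also being internal to another segment (both are possible once $d_H(v)\ge 4$). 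At such a vertex the fibre point $(u,v)$ in $G\boxdot W_1$ has $H$-degree at least four and is not a subdivision vertex, so $G\boxdot W_1$ is \emph{not} in general a subdivision of $G\Box P_{r+1}$, and you cannot invoke Lemma~\ref{1} plus Theorem~\ref{1.2} as a black box. The same objection hits your reduction of the closed-trail case to $G\Box C_r$.

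The paper sidesteps this by working with explicit decompositions throughout. It takes the zigzag families $\mathcal P_{11},\mathcal P_{12},\mathcal P_{13}$ (and $\mathcal P_{21},\dots,\mathcal P_{31}$) already built for $P_m\boxdot(\cdot)$; since each zigzag path lives over a single segment $P_{y_j,y_{j+1}}$, which is a simple path, these remain simple paths in $G\boxdot W$ regardless of repetitions in the trail. Then, writing $G=P_m+\{u_1u_3,u_3u_5\}$, it simply \emph{grafts the two chord edges onto existing short paths} of the decomposition. For the closed-trail case this is strikingly easy: one adds the $r$ two-edge paths $(u_1,y_j)(u_3,y_j)(u_5,y_j)$ to $\mathcal P_{31}$, getting $2r\le\tfrac{mr}{2}$ paths (since $m\ge 5$). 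No torus winding, no parity analysis, no absorption along the pendant path is needed; the chord edges are just counted separately and the slack $\tfrac{mr}{2}-r$ swallows them. So your instinct that the closed-trail case is ``genuinely hard'' and needs a delicate joint construction is misplaced for this particular $G$: the extra structure of $A$ is handled by two local edge-appends, not a global rerouting.
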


\begin{proof}

Let $G=P_m+ \{u_1u_3, u_3u_5\}$ where $P_m=u_1u_2\cdots u_m$, $m\geq 5$.

\vspace{2mm}(1) For an odd-odd path $P_{y_1,y_2}$ of $\mathcal{P}(H)$, $G\boxdot P_{y_1,y_2}$ can be decomposed into $m$ paths as follows:

\vspace{2mm} $\mathcal{P}(G\boxdot P_{y_1,y_2})=\mathcal{P}_{11} \cup \{\mathcal{P}_{12}\setminus\{(u_1,y_1)(u_2,y_1), (u_3,y_1)(u_4,y_1)\}\} \cup \{(u_2,y_1)(u_1,y_1)(u_3,y_1), \\ (u_4,y_1)(u_3,y_1)(u_5,y_1)\} \cup \{\mathcal{P}_{13}\setminus \{(u_2,y_2)(u_3,y_2),(u_4,y_2)(u_5,y_2)\}\} \cup \{(u_1,y_2)(u_3,y_2)(u_2,y_2), \\ (u_4,y_2)(u_5,y_2)(u_3,y_2)\}$.

\vspace{1.5mm} Then $p(G\boxdot H_1)\leq \sum_{i=1}^{t_1} p(G \boxdot P_{y_{1_i},y_{2_i}})= \sum_{i=1}^{t_1} p(G \Box P_2)\leq \frac{mn_{o}^{1}(H)}{2}$.

\vspace{2mm}(2) For a $\mathcal{P}$-$r$-trail $W_1=y_1P_{y_1,y_2}\cdots y_{r}P_{y_{r}y_{r+1}}y_{r+1}$ of $\mathcal{P}(H)$, $G\boxdot W_1$ can be decomposed into $m+2r-2$ paths as follows:

\vspace{2mm} $\mathcal{P}_{24}=\{(u_1,y_j)(u_3,y_j)(u_5,y_j)\}$ for each $ j \in \{2,3, \cdots, r\}$.

$\mathcal{P}(G\boxdot W_1)=\mathcal{P}_{21} \cup \{\mathcal{P}_{22}\setminus\{(u_1,y_1)(u_2,y_1),(u_3,y_1)(u_4,y_1)\}\}
\cup \{(u_2,y_1)(u_1,y_1)(u_3,y_1), \\ (u_4,y_1)(u_3,y_1)(u_5,y_1)\} \cup \{\mathcal{P}_{13}\setminus
\{(u_2,y_{r+1})(u_3,y_{r+1}),(u_4,y_{r+1})(u_5,y_{r+1})\}\} \cup \{(u_1,y_{r+1})\\ (u_3,y_{r+1})(u_2,y_{r+1}),(u_4,y_{r+1})(u_5,y_{r+1})(u_3,y_{r+1})\} \cup \mathcal{P}_{24}$.

\vspace{1.5mm} Then $p(G\boxdot H_2)\leq \sum_{i=1}^{t_2} p(G \boxdot W_{1_i}) \leq \sum_{i=1}^{t_2} (m+2r_i-2)\leq \sum_{i=1}^{t_2}\frac{m(r_i+1)}{2}\leq \frac{m(n_{o}^{2}(H)+n_{e}^{1}(H))}{2}$ where $r_i$ is the number of paths on $W_{1_i}$.

\vspace{2mm}(3) For a $\mathcal{P}$-$r$-closed-trail $W_2=y_1P_{y_1,y_2}\cdots y_{r}P_{y_{r}y_{1}}y_{1}$ of $\mathcal{P}(H)$, $G\boxdot W_2$ can be decomposed into $2r$ paths as follows:

\vspace{2mm} $\mathcal{P}_{32}=\{(u_1,y_j)(u_3,y_j)(u_5,y_j)\}$ for each $ j \in \{1,2, \cdots, r\}$.

$\mathcal{P}(G\boxdot W_2)=\mathcal{P}_{31} \cup \mathcal{P}_{32}$.

\vspace{1.5mm} Then $p(G\boxdot H_3)\leq \sum_{i=1}^{t_3} p(G \boxdot W_{2_i})\leq \sum_{i=1}^{t_3} 2r_i \leq \frac{mn_{e}^{2}(H)}{2}$ where $r_i$ is the number of paths on $W_{2_i}$.

\vspace{2mm}Combining the above, we conclude that there is a path decomposition $\mathcal{P}(G\Box H)$ of graph $G\Box H$ with

\begin{eqnarray*}
p(G\Box H)&\leq&p(G\boxdot H_1)+p(G\boxdot H_2)+p(G\boxdot H_3)\\
&\leq&\frac{mn_{o}^{1}(H)}{2}+\frac{m(n_{o}^{2}(H)+n_{e}^{1}(H))}{2}+\frac{mn_{e}^{2}(H)}{2}\\
&=& \frac{mn}{2}.
\end{eqnarray*}
\end{proof}

\begin{lemma}\label{3.2}
Let $G$ be a connected graph of order $m$ and $H$ be a connected graph of order $n$. If $G \in \mathcal{B}$, then $p(G\Box H)\leq \frac{mn}{2}$.
\end{lemma}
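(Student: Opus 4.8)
The strategy mirrors the proof of Lemma \ref{3.1} exactly: I would decompose $G \Box H$ according to the partition $E(G \Box H) = \bigcup_{i=1}^3 E(G \boxdot H_i)$ induced by the path decomposition $\mathcal{P}(H)$ of Lemma \ref{2.1}, and handle each of the three pieces (odd-odd paths, $\mathcal{P}$-$r$-trails, $\mathcal{P}$-$r$-closed-trails) separately. Concretely, I would write $G = B + P$ where $B$ is the graph in Fig. 2 and $P$ is the attached path identified at $x \in \{u_4, v_4\}$, fixing an explicit labelling $P_m = u_1 u_2 \cdots u_m$ together with the extra chords that turn $P_m$ into $B$. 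The bound to target in each case is the same as in Lemma \ref{3.1}: for an odd-odd path, decompose $G \boxdot P_{y_1,y_2}$ into at most $m$ paths so that $p(G \boxdot H_1) \le \frac{m\, n_o^1(H)}{2}$; for a $\mathcal{P}$-$r$-trail, into at most $m + 2r - 2 \le \frac{m(r+1)}{2}$ paths so that $p(G \boxdot H_2) \le \frac{m(n_o^2(H) + n_e^1(H))}{2}$; and for a $\mathcal{P}$-$r$-closed-trail, into at most $2r$ paths so that $p(G \boxdot H_3) \le \frac{m\, n_e^2(H)}{2}$. Summing these three bounds yields $p(G \Box H) \le \frac{mn}{2}$, which is precisely the desired conclusion.

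The substantive work is producing, for the specific structure $B$ of Fig. 2, the explicit local surgery on the generic $\mathcal{P}_{11}, \mathcal{P}_{12}, \mathcal{P}_{13}$ (and the $\mathcal{P}_{2\bullet}, \mathcal{P}_{3\bullet}$) decompositions supplied for $P_m \Box H$ in Section 3. For the plain path $P_m$, the building blocks are the length-one edges $(u_i,y)(u_{i+1},y)$; the graph $B$ differs from $P_m$ only by a bounded number of additional chords near the vertex $u_4$ (and its symmetric partner $v_4$). First I would identify which edges of the generic path-product decomposition meet the chord endpoints, remove them from the relevant $\mathcal{P}_{1j}$ (respectively $\mathcal{P}_{2j}$, $\mathcal{P}_{3j}$), and recombine them with the chord edges into short paths of length two, exactly in the style of the replacements $(u_2,y_1)(u_1,y_1)(u_3,y_1)$ used in Lemma \ref{3.1}. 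The key accounting point is that each such local recombination must not increase the path count beyond the stated budget: the chords of $B$ can be absorbed by pairing them with existing edges so that two edges become one length-two path, keeping the totals at $m$, $m+2r-2$, and $2r$ respectively.

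I would then verify the three per-case counts by the same telescoping computation as in Lemma \ref{3.1}: for the trail case the inequality $m + 2r - 2 \le \frac{m(r+1)}{2}$ must hold, which requires $m \ge$ some small constant determined by the order of $B$ (here the relevant threshold replaces the $m \ge 5$ used for family $\mathcal{A}$), and I would check this boundary condition explicitly against $n(B)$. The main obstacle I anticipate is purely combinatorial bookkeeping: ensuring that the edges incident to the two symmetric attachment points $u_4$ and $v_4$ of $B$, together with the chords internal to $B$, are covered without double-counting and without overshooting the path budget in the parity-sensitive endpoint positions (the odd/even index sets $\{1,3,\dots\}$ versus $\{2,4,\dots\}$). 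Since $B$ is a fixed finite graph, this reduces to a finite case check that I would carry out by exhibiting the modified path families explicitly, exactly as the author does for $\mathcal{A}$; no genuinely new idea beyond the template of Lemma \ref{3.1} is needed.
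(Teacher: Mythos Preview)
Your template is right in spirit but the numerical targets you lifted from Lemma \ref{3.1} do not survive the change from $\mathcal{A}$ to $\mathcal{B}$, and this is where the real content of the proof lies. In $\mathcal{A}$ the two extra chords $u_1u_3$ and $u_3u_5$ share the vertex $u_3$, so for each intermediate real vertex $y_j$ they assemble into a single length-two path $(u_1,y_j)(u_3,y_j)(u_5,y_j)$; this is why the closed-trail count is $r + r = 2r$ and the trail count is $m+2r-2$. In $\mathcal{B}$ the chords are $u_1u_4$ and $u_2u_5$ (i.e.\ $G = P_m + \{u_1u_4, u_2u_5\}$), which are vertex-disjoint. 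At an intermediate $y_j$ there are no $\mathcal{P}_{12}$/$\mathcal{P}_{13}$ edges available to absorb them, so the straightforward surgery produces \emph{two} extra paths per $y_j$, not one. The honest counts for $\mathcal{B}$ are therefore $m+3r-5$ for trails and $3r$ for closed trails (the endpoints $y_1,y_{r+1}$ are handled by a length-four path $(u_3,y)(u_4,y)(u_1,y)(u_2,y)(u_5,y)$, which is why the odd-odd count drops to $m-2$ and the trail constant is $-5$ rather than $-3$).

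Now the inequality you need, $m+3r-5 \le \tfrac{m(r+1)}{2}$, respectively $3r \le \tfrac{mr}{2}$, holds only for $m \ge 6$; for $m = 5$ (which is admissible since $|V(B)| = 5$) both fail once $r$ is large. The paper resolves the $m=5$ case by a genuinely different construction: instead of adding the chord edges as isolated length-one paths at each $y_j$, it threads them through the $H$-direction using segments $P^1_{y_j,y_{j+1}}$ and $P^5_{y_j,y_{j+1}}$, producing paths of the form $(u_4,y_j)(u_1,y_j)P^1_{y_j,y_{j+1}}(u_1,y_{j+1})(u_4,y_{j+1})$ that service two consecutive $y$-levels at once, and then repairs the damaged $\mathcal{P}_{21}$/$\mathcal{P}_{31}$ accordingly. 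This brings the counts down to $2r+2$ (trails) and $2r+1$ (closed trails), which do satisfy the required bounds when $m=5$. Your proposal anticipates only a ``threshold check'' of the same flavour as $m\ge 5$ in Lemma \ref{3.1}; what is actually needed for $\mathcal{B}$ is this separate two-level construction in the boundary case, and that is the missing idea.
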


\begin{proof}

Let $G=P_m+ \{u_1u_4, u_2u_5\}$ where $P_m=u_1u_2\cdots u_m$, $m\geq 5$.

\vspace{2mm} (1) For an odd-odd path $P_{y_1,y_2}$ of $\mathcal{P}(H)$, $G\boxdot P_{y_1,y_2}$ can be decomposed into $m-2$ paths as follows:

\vspace{2mm} $\mathcal{P}(G\boxdot P_{y_1,y_2})=\mathcal{P}_{11} \cup \{\mathcal{P}_{12} \setminus \{(u_1,y_1)(u_2,y_1),(u_3,y_1)(u_4,y_1)\}\} \cup \{(u_3,y_1)(u_4,y_1)(u_1,y_1)\\ (u_2,y_1)(u_5,y_1)\} \cup \{\mathcal{P}_{13} \setminus \{(u_2,y_2)(u_3,y_2),(u_4,y_2)(u_5,y_2)\}\} \cup
\{(u_1,y_2)(u_4,y_2)(u_5,y_2)(u_2,y_2)(u_3,y_2)\}$.

\vspace{1.5mm} Then $p(G\boxdot H_1)=\sum_{i=1}^{t_1} p(G \boxdot P_{y_{1_i},y_{2_i}})=\frac{n_{o}^{1}(H)}{2} p(G \Box P_2)\leq \frac{mn_{o}^{1}(H)}{2}$.

\vspace{2mm} (2) For a $\mathcal{P}$-$r$-trail $W_1=y_1P_{y_1,y_2}\cdots y_{r}P_{y_{r}y_{r+1}}y_{r+1}$ of $\mathcal{P}(H)$, we will consider the two cases as follows:

\vspace{2mm}\noindent{\bf Case 1.} If $m\geq 6$, then $G\boxdot W_1$ can be decomposed into $m+3r-5$ paths as follows:

\vspace{1.5mm} $\mathcal{P}_{24}=\{(u_1,y_j)(u_4,y_j),(u_2,y_j)(u_5,y_j)\}$ for each $ j \in \{2,3, \cdots, r\}$.

$\mathcal{P}(G\boxdot W_1)=\mathcal{P}_{21} \cup \{\mathcal{P}_{22} \setminus \{(u_1,y_1)(u_2,y_1),(u_3,y_1)(u_4,y_1)\}\} \cup \{(u_3,y_1)(u_4,y_1)(u_1,y_1)$ $(u_2,y_1)(u_5,y_1)\} \cup \{\mathcal{P}_{23} \setminus \{(u_2,y_{r+1})(u_3,y_{r+1}),(u_4,y_{r+1})(u_5,y_{r+1})\}\} \cup
\{(u_1,y_{r+1})(u_4,y_{r+1})(u_5,y_{r+1})$ $(u_2,y_{r+1})(u_3,y_{r+1})\} \cup \mathcal{P}_{24}$.

\vspace{1.5mm} Then $p(G\boxdot H_2)\leq \sum_{i=1}^{t_2} p(G \boxdot W_{1_i})\leq \sum_{i=1}^{t_2} (m+3r_i-5)\leq \sum_{i=1}^{t_2} \frac{m(r_i+1)}{2}\leq \frac{m(n_{o}^{2}(H)+n_{e}^{1}(H))}{2}$ where $r_i$ is the number of paths on $W_{1_i}$.

\vspace{2mm}\noindent{\bf Case 2.} If $m= 5$, then $P_{m}\boxdot W_1$ can be decomposed into at most $2r+2$ paths as follows:

\vspace{2mm} When $r$ is even,
$\mathcal{P}_{24}=\{(u_1,y_{r})(u_4,y_{r}),(u_2,y_{r})(u_5,y_{r})\} \cup \{(u_2,y_j)(u_5,y_j)P^5_{y_j,y_{j+1}}(u_5,y_{j+1})\\ (u_2,y_{j+1})\} \cup \{(u_4,y_j)(u_1,y_j)P^1_{y_j,y_{j+1}}(u_1,y_{j+1})(u_4,y_{j+1})\}$ for each $j\in \{2,4,\cdots, r-2\}$.

\vspace{2mm} When $r$ is odd,
$\mathcal{P}_{24}=\{(u_2,y_j)(u_5,y_j)P^5_{y_j,y_{j+1}}(u_5,y_{j+1})(u_2,y_{j+1})\}\cup \{(u_4,y_j)(u_1,y_j)\\ P^1_{y_j,y_{j+1}}(u_1,y_{j+1})(u_4,y_{j+1})\}$ for each $j\in \{2,4,\cdots, r-1\}$.

\vspace{2mm} Let $\mathcal{P'}_{21}=\{P': P'=P-\{P\cap Q\}, P\in \mathcal{P}_{21}\ and\ Q\in \mathcal{P}_{24}\}$.

\vspace{2mm} Hence,
$\mathcal{P}(G\boxdot W_1)=\mathcal{P'}_{21} \cup \mathcal{P}_{24} \cup \{(u_3,y_1)(u_4,y_1)(u_1,y_1)(u_2,y_1)(u_5,y_1)\} \cup \{(u_1,y_{r+1})\\ (u_4,y_{r+1})(u_5,y_{r+1})(u_2,y_{r+1})(u_3,y_{r+1})\}$.

\vspace{2mm} Then $p(G\boxdot H_2)\leq \sum_{i=1}^{t_2} p(G \boxdot W_{1_i})\leq \sum_{i=1}^{t_2} (2r_i+2)\leq \sum_{i=1}^{t_2} \frac{5(r_i+1))}{2} = \frac{5}{2}(n_{o}^{2}(H)+n_{e}^{1}(H))$ where $r_i$ is the number of paths on $W_{1_i}$.

\vspace{2mm}(3) For a $\mathcal{P}$-$r$-closed-trail $W_2=y_1P_{y_1,y_2}\cdots y_{r}P_{y_{r}y_{1}}y_{1}$ of $\mathcal{P}(H)$, we will consider the two cases as follows:

\vspace{2mm}\noindent{\bf Case 1.} If $m\geq 6$, then $P_{m}\boxdot W_2$ can be decomposed into $3r$ paths as follows:

\vspace{2mm}$\mathcal{P}_{32}=\{(u_1,y_j)(u_4,y_j),(u_2,y_j)(u_5,y_j)\}$ for each $ j \in \{1,3, \cdots, r\}$.

$\mathcal{P}(G\boxdot W_2)=\mathcal{P}_{31} \cup \mathcal{P}_{32}$.

\vspace{1.5mm} Then $p(G\boxdot H_3)\leq \sum_{i=1}^{t_3} p(G \boxdot W_{2_i}) \leq \sum_{i=1}^{t_3} 3r_i \leq m\frac{n_{e}^{2}(H)}{2}$ where $r_i$ is the number of paths on $W_{2_i}$.

\vspace{2mm}\noindent{\bf Case 2.} If $m= 5$, then $P_{m}\boxdot W_2$ can be decomposed into at most $2r+1$ paths as follows:

\vspace{2mm} When $r$ is odd, $\mathcal{P}_{32}=\{(u_1,y_{r})(u_4,y_{r}),(u_2,y_{r})(u_5,y_{r})\} \cup \{(u_2,y_j)(u_5,y_j)P^5_{y_j,y_{j+1}}(u_5,y_{j+1})\\ (u_2,y_{j+1})\}\cup \{(u_4,y_j)(u_1,y_j)P^1_{y_j,y_{j+1}}(u_1,y_{j+1})(u_4,y_{j+1})\}$ for each $j\in \{1,3,\cdots, r-2\}$.

\vspace{2mm} When $r$ is even, $\mathcal{P}_{32}=\{(u_2,y_j)(u_5,y_j)P^5_{y_j,y_{j+1}}(u_5,y_{j+1})(u_2,y_{j+1})\}\cup \{(u_4,y_j)(u_1,y_j)\\ P^1_{y_j,y_{j+1}}(u_1,y_{j+1})(u_4,y_{j+1})\}$ for each $j\in \{1,3,\cdots, r-1\}$.

\vspace{2mm} Let $\mathcal{P'}_{31}=\{P': P'=P-\{P\cap Q\}, P\in \mathcal{P}_{31}\ and\ Q\in \mathcal{P}_{32}\}$.

$\mathcal{P}(G\boxdot W_2)=\mathcal{P'}_{31} \cup \mathcal{P}_{32}$.

\vspace{1.5mm} Then $p(G\boxdot H_3)\leq \sum_{i=1}^{t_3} p(G \boxdot W_{2_i})\leq \sum_{i=1}^{t_3} (2r_i+1) \leq \frac{mn_{e}^{2}(H)}{2}$ where $r_i$ is the number of paths on $W_{2_i}$.

\vspace{2mm} Summing up the above, $G\Box H$ has a path decomposition $\mathcal{P}(G\Box H)$ with
\begin{eqnarray*}
p(G\Box H)&\leq&p(G\boxdot H_1)+p(G\boxdot H_2)+p(G\boxdot H_3)\\
&\leq&\frac{mn_{o}^{1}(H)}{2}+\frac{m(n_{o}^{2}(H)+n_e^{1}(H))}{2}+\frac{mn_e^{2}(H)}{2}\\
&=& \frac{mn}{2}.
\end{eqnarray*}
\end{proof}

\begin{lemma}\label{3.3}
Let $G$ be a connected graph of order $m$ and $H$ be a connected graph of order $n$. If $G \in \mathcal{F}$, then $p(G\Box H)\leq \frac{mn}{2}$.
\end{lemma}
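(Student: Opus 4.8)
The plan is to follow the template established in the proofs of Lemmas \ref{3.1} and \ref{3.2} essentially verbatim, adapting only the local re-routing to the chord structure of $F$. First I would fix an explicit representation $G = P_m + S$, where $P_m = u_1u_2\cdots u_m$ and $S$ is the pair of chords determined by the bicyclic core $F$ (so that an initial segment $u_1,\ldots,u_k$ together with $S$ spans a copy of $F$, and $u_k u_{k+1}\cdots u_m$ is the attached path $P$). As before, I would apply Lemma \ref{2.1} to decompose $H$ into the edge-disjoint union $H = H_1 \cup H_2 \cup H_3$, where $H_1$ is the union of the odd-odd paths, $H_2$ the union of the $\mathcal{P}$-$r$-trails, and $H_3$ the union of the $\mathcal{P}$-$r$-closed-trails, so that $p(G\Box H) \le p(G\boxdot H_1) + p(G\boxdot H_2) + p(G\boxdot H_3)$ and $E(G\Box H) = \bigcup_{i=1}^3 E(G\boxdot H_i)$.

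The heart of the argument is to handle each of the three trail types separately, starting from the ready-made decompositions $\mathcal{P}_{11},\mathcal{P}_{12},\mathcal{P}_{13}$, then $\mathcal{P}_{21},\mathcal{P}_{22},\mathcal{P}_{23}$, and $\mathcal{P}_{31}$ of $P_m\boxdot(\cdot)$ recalled above, and then re-routing the handful of paths that meet the two chords of $F$ so as to absorb the chord-edges without increasing the path count. Concretely, for an odd-odd path $P_{y_1,y_2}$ I would merge the short chord-segments into longer paths and decompose $G\boxdot P_{y_1,y_2}$ into at most $m$ paths; for a $\mathcal{P}$-$r$-trail $W_1$ into at most $m + cr - d$ paths for the constants $c,d$ dictated by $F$; and for a $\mathcal{P}$-$r$-closed-trail $W_2$ into at most $cr$ paths. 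These are then checked against the budgets $m$, $\tfrac{m(r+1)}{2}$, and $\tfrac{mr}{2}$, and summed via $2t_1 + \sum_i(r_i+1) + \sum_j r_j = n$ to the global bound $p(G\Box H) \le \tfrac{mn}{2}$, exactly in the form of the displayed computations closing Lemmas \ref{3.1} and \ref{3.2}.

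The hard part will be the $\mathcal{P}$-$r$-trail case together with the smallest admissible order $m$. For large $m$ the inequality $m + cr - d \le \tfrac{m(r+1)}{2}$ holds comfortably, factoring as $(m-2c)(r-1) + \text{const} \ge 0$; but at the boundary order the naive copy-by-copy count overshoots the budget, just as it did at $m=5$ in Lemma \ref{3.2}. There I would reuse the device from Case 2 of Lemma \ref{3.2}: rather than cutting $G\boxdot W_1$ layer by layer, I would merge paths across consecutive layers through the chord-edges, using paths of the shape $(u_a,y_j)(u_b,y_j)P^b_{y_j,y_{j+1}}(u_b,y_{j+1})(u_a,y_{j+1})$ that travel along a chord, traverse a copy of a path of $H$, and return along the matching chord. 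This pairs up adjacent layers and saves a linear-in-$r$ number of paths, at the cost of splitting into the parity subcases ($r$ even or odd) and of replacing $\mathcal{P}_{21}$ by its trimmed version $\mathcal{P}'_{21}$ to avoid double-covering. Verifying that this merged family covers every edge of $G\boxdot W_1$ exactly once is the delicate bookkeeping; once it is in place, the final summation is routine and identical in form to the two preceding lemmas.
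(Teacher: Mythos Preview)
Your plan is correct and matches the paper's approach almost exactly: represent $G = P_m + \{u_1u_3, u_2u_4\}$ with $m\ge 4$, split $H$ via Lemma~\ref{2.1}, start from the ready-made $\mathcal{P}_{1*},\mathcal{P}_{2*},\mathcal{P}_{3*}$ decompositions of $P_m\boxdot(\cdot)$, and absorb the two chord-edges by local re-routing, with cross-layer merging at the small orders.

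The one point where you slightly underestimate the work is the boundary analysis. With the chord set $\{u_1u_3,u_2u_4\}$ the generic trail count is $m+3r-3$, and $m+3r-3\le \tfrac{m(r+1)}{2}$ fails for \emph{both} $m=5$ and $m=4$, not just the minimal order; likewise the generic closed-trail count is $3r$, and $3r\le \tfrac{mr}{2}$ also fails for $m\in\{4,5\}$. The paper therefore splits \emph{each} of the trail and closed-trail parts into three subcases $m\ge 6$, $m=5$, $m=4$, applying in the latter two exactly the cross-layer merging device you describe (paths of the form $(u_a,y_j)(u_b,y_j)P^b_{y_j,y_{j+1}}(u_b,y_{j+1})(u_a,y_{j+1})$, with the trimmed $\mathcal{P}'_{21}$ or $\mathcal{P}'_{31}$ and an $r$-parity split). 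So your outline goes through, but expect two boundary orders rather than one, and expect the closed-trail case to need the same special treatment as the trail case.
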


\begin{proof}

Let $G=P_m+ \{u_1u_3, u_2u_4\}$ where $P_m=u_1u_2\cdots u_m$ and $m\geq 4$.

\vspace{2mm}(1) For an odd-odd path $P_{y_1,y_2}$ of $\mathcal{P}(H)$, $G\boxdot P_{y_1,y_2}$ can be decomposed into $m$ paths as follows:

\vspace{2mm}
$\mathcal{P}(G\boxdot P_{y_1,y_2})=\mathcal{P}_{11} \cup \{\mathcal{P}_{12} \setminus \{(u_1,y_1)(u_2,y_1),(u_3,y_1)(u_4,y_1)\}\} \cup \{(u_1,y_1)(u_3,y_1)(u_4,y_1),\\(u_1,y_1)(u_2,y_1)(u_4,y_1)\} \cup \{\mathcal{P}_{13} \setminus \{(u_2,y_2)(u_3,y_2)\}\} \cup \{(u_1,y_2)(u_3,y_2)(u_2,y_2)(u_4,y_2)\}$.

\vspace{1.5mm} Then $p(G\boxdot H_1)\leq \sum_{i=1}^{t_1} p(G \boxdot P_{y_{1_i},y_{2_i}}) = \sum_{i=1}^{t_1} p(G \Box P_2) \leq \sum_{i=1}^{t_1} m \leq \frac{mn_{o}^{1}(H)}{2}$.

\vspace{2mm}(2) For a $\mathcal{P}$-$r$-trail $W_1=y_1P_{y_1,y_2}\cdots y_{r}P_{y_{r}y_{r+1}}y_{r+1}$ of $\mathcal{P}(H)$, we will consider the three cases as follows:

\vspace{2mm}\noindent{\bf Case 1.} If $m\geq 6$, then $P_{m}\boxdot W_1$ can be decomposed into $m+3r-3$ paths as follows:

\vspace{2mm} Let $\mathcal{P}_{24}=\{(u_1,y_j)(u_3,y_j),(u_2,y_j)(u_4,y_j)\}$ for each $ j \in \{2,3, \cdots, r\}$.

$\mathcal{P}(G\boxdot W_1)=\mathcal{P}_{21} \cup \{\mathcal{P}_{22} \setminus \{(u_1,y_1)(u_2,y_1),(u_3,y_1)(u_4,y_1)\}\} \cup \{(u_1,y_1)(u_3,y_1)(u_4,y_1),\\ (u_1,y_1)(u_2,y_1)(u_4,y_1)\} \cup \{\mathcal{P}_{23} \setminus \{(u_2,y_{r+1})(u_3,y_{r+1})\}\} \cup  \{(u_1,y_{r+1})(u_3,y_{r+1})(u_2,y_{r+1})(u_4,y_{r+1})\}\\ \cup \mathcal{P}_{24}$.

\vspace{1.5mm} Then $p(G\boxdot H_2)\leq \sum_{i=1}^{t_2} p(G \boxdot W_{1_i}) \leq \sum_{i=1}^{t_2} (m+3r_i-3)\leq \sum_{i=1}^{t_2} \frac{m(r_i+1)}{2}\leq \frac{m(n_{o}^{2}(H)+n_{e}^{1}(H))}{2}$ where $r_i$ is the number of paths on $W_{1_i}$.

\vspace{2mm}\noindent{\bf Case 2.} If $m= 5$, then $P_{m}\boxdot W_1$ can be decomposed into at most $\frac{5r+3}{2}$ paths as follows:

\vspace{2mm} When $r$ is even, $\mathcal{P}_{24}=\{(u_4,y_1)(u_3,y_1)(u_1,y_1)P^1_{y_1,y_2}(u_1,y_2)(u_3,y_2)\} \cup \{\{(u_3,y_j)(u_1,y_j)\\ P^1_{y_j,y_{j+1}}(u_1,y_{j+1})(u_3,y_{j+1})\}$ for each $j\in \{3,5,\cdots, r-1\}$\}.

\vspace{2mm} When $r$ is odd, $\mathcal{P}_{24}=\{(u_4,y_1)(u_3,y_1)(u_1,y_1)P^1_{y_1,y_2}(u_1,y_2)(u_3,y_2)\} \cup \{(u_1,y_r)(u_3,y_r)\} \cup \{\{(u_3,y_j)(u_1,y_j)P^1_{y_j,y_{j+1}}(u_1,y_{j+1})(u_3,y_{j+1})\}$ for each $j\in \{3,5,\cdots, r-2\}$\}.

\vspace{2mm} $\mathcal{P}_{25}=\{(u_1,y_1)(u_2,y_1)(u_4,y_1)\} \cup \{\{(u_2,y_j)(u_4,y_j)\}$ for each $j\in \{2,3,\cdots, r\}$\}.

Let $\mathcal{P'}_{21}=\{P': P'=P-\{P\cap Q\}, P\in \mathcal{P}_{21}\ and\ Q\in \mathcal{P}_{24}\}$.

$\mathcal{P}(G\boxdot W_1)=\mathcal{P'}_{21} \cup \mathcal{P}_{24} \cup \mathcal{P}_{25} \cup \{(u_1,y_{r+1})(u_3,y_{r+1})(u_2,y_{r+1})(u_4,y_{r+1})(u_5,y_{r+1})\}$.

\vspace{2mm} Then $p(G\boxdot H_2)\leq \sum_{i=1}^{t_2} p(G \boxdot W_{1_i}) \leq \sum_{i=1}^{t_2} \frac{5r_i+3}{2}\leq \sum_{i=1}^{t_2} \frac{5(r_i+1)}{2} = \frac{5}{2}(n_{o}^{2}(H)+n_{e}^{1}(H))$ where $r_i$ is the number of paths on $W_{1_i}$.

\vspace{2mm}\noindent{\bf Case 3.} If $m= 4$, then $P_{m}\boxdot W_1$ can be decomposed into at most $2r+1$ paths as follows:

\vspace{2mm} When $r$ is odd, $\mathcal{P}_{24}=\{(u_4,y_1)(u_3,y_1)P^1_{y_1,y_2}(u_1,y_1)(u_1,y_2)(u_3,y_2)\} \cup \{(u_1,y_1)(u_2,y_1)\\ (u_4,y_1)P^4_{y_1,y_2}(u_4,y_2)(u_2,y_2)\} \cup \{(u_1,y_{r+1})(u_3,y_{r+1})(u_2,y_{r+1})(u_4,y_{r+1})\}$,

\vspace{1.5mm} $\mathcal{P}_{25}=\{(u_3,y_j)(u_1,y_j)P^1_{y_j,y_{j+1}}(u_1,y_{j+1})(u_3,y_{j+1})\} \cup \{(u_2,y_j)(u_4,y_j)P^4_{y_j,y_{j+1}}(u_4,y_{j+1})\\ (u_2,y_{j+1})\}$ for each $j\in \{3,4,\cdots, r-1\}$.

\vspace{2mm} When $r$ is even, $\mathcal{P}_{24}=\{(u_4,y_1)(u_3,y_1)(u_1,y_1)P^1_{y_1,y_2}(u_1,y_2)(u_3,y_2)\}\cup \{(u_1,y_1)(u_2,y_1)\\ (u_4,y_1)P^4_{y_1,y_2}(u_4,y_2)(u_2,y_2)\} \cup \{(u_3,y_r)(u_1,y_r)P^1_{y_r,y_{r+1}}(u_1,y_{r+1})(u_3,y_{r+1})(u_2,y_{r+1})(u_4,y_{r+1})\\ P^4_{y_r,y_{r+1}}(u_4,y_r)(u_2,y_r)\}$,

\vspace{1.5mm} $\mathcal{P}_{25}=\{(u_3,y_j)(u_1,y_j)P^1_{y_j,y_{j+1}}(u_1,y_{j+1})(u_3,y_{j+1})\} \cup \{(u_2,y_j)(u_4,y_j)P^4_{y_j,y_{j+1}}(u_4,y_{j+1})\\ (u_2,y_{j+1})\}$ for each $j\in \{3,4,\cdots, r-2\}$.

\vspace{2mm} Let $\mathcal{P'}_{21}=\{P': P'=P-\{P\cap Q\}, P\in \mathcal{P}_{21}\ and\ Q\in \{\mathcal{P}_{24}, \mathcal{P}_{25}\}\}$.

$\mathcal{P}(G\boxdot W_1)=\mathcal{P'}_{21} \cup \mathcal{P}_{24} \cup \mathcal{P}_{25}$.

\vspace{2mm} Then $p(G\boxdot H_2)\leq \sum_{i=1}^{t_2} p(G \boxdot W_{1_i})\leq \sum_{i=1}^{t_2}(2r_i+1)\leq \sum_{i=1}^{t_2} 2(r_i+1)= 2(n_{o}^{2}(H)+n_{e}^{1}(H))$ where $r_i$ is the number of paths on $W_{1_i}$.

\vspace{2mm}(3) For a $\mathcal{P}$-$r$-closed-trail $W_2=y_1P_{y_1,y_2}\cdots y_{r}P_{y_{r}y_{1}}y_{1}$ of $\mathcal{P}(H)$, we will consider the three cases as follows:

\vspace{2mm}\noindent{\bf Case 1.} If $m\geq 6$, then $P_{m}\boxdot W_2$ can be decomposed into $3r$ paths as follows:

\vspace{2mm}Let $\mathcal{P}_{32}=\{(u_1,y_j)(u_3,y_j),(u_2,y_j)(u_4,y_j)\}$ for each $ j \in \{1,3, \cdots, r\}$,

$\mathcal{P}(G\boxdot W_2)=\mathcal{P}_{31} \cup \mathcal{P}_{32}$.

\vspace{1.5mm} Then $p(G\boxdot H_3)\leq \sum_{i=1}^{t_3} p(G \boxdot W_{2_i}) \leq \sum_{i=1}^{t_3} 3r_i \leq \frac{mn_{e}^{2}(H)}{2}$ where $r_i$ is the number of paths on $W_{2_i}$.

\vspace{2mm}\noindent{\bf Case 2.} If $m= 5$, then $P_{m}\boxdot W_2$ can be decomposed into at most $\frac{5r}{2}$ paths as follows:

\vspace{2mm} When $r$ is even, $\mathcal{P}_{32}= \{(u_3,y_j)(u_1,y_j)P^1_{y_j,y_{j+1}}(u_1,y_{j+1})(u_3,y_{j+1})\}$ for each $j\in \{1,3,\cdots,\\ r-1\}$.

$\mathcal{P}_{33}=\{(u_2,y_j)(u_4,y_j)\}$ for each $j\in \{1,2,\cdots, r\}$.

Let $\mathcal{P'}_{31}=\{P': P'=P-\{P\cap Q\}, P\in \mathcal{P}_{31}\ and\ Q\in \mathcal{P}_{32}\}$.

$\mathcal{P}(G\boxdot W_2)=\mathcal{P'}_{31} \cup \mathcal{P}_{32} \cup \mathcal{P}_{33}.$

\vspace{2mm} When $r$ is odd, $\mathcal{P}_{32}=\{(u_3,y_j)(u_1,y_j)P^1_{y_j,y_{j+1}}(u_1,y_{j+1})(u_3,y_{j+1})\}$ for each $j\in \{2,4,\cdots,\\ r-1\}$,

$\mathcal{P}_{33}=\{(u_2,y_j)(u_4,y_j)\}$ for each $j\in \{1,2,\cdots, r\}$.

Let $\mathcal{P'}_{31}=\{\{\bigcup_{i=2}^{5} P_{y_{r-1},y_r}^i\}  \cup \{P_{y_r,y_1}^1\} \cup \{(u_1,y_r)(u_2,y_r),(u_2,y_{r-1})(u_3,y_{r-1}),(u_3,y_r)(u_4,y_r),\\ (u_4,y_{r-1})(u_5,y_{r-1})\} \cup \{(u_1,y_1)(u_3,y_1)\}\}\cup \{\{\bigcup_{i=2}^{5} P_{y_r,y_1}^i\}\cup \{(u_1,y_1)(u_2,y_1),(u_2,y_{r})(u_3,y_{r}),\\ (u_3,y_1)(u_4,y_2),(u_4,y_{r})(u_5,y_{r})\}\} \cup \{\{\bigcup_{i=1}^{5} P_{y_j,y_{j+1}}^i\} \cup \{(u_1,y_{j+1})(u_2,y_{j+1}),(u_2,y_j)(u_3,y_j),\\ (u_3,y_{j+1})(u_4,y_{j+1}), (u_4,y_j)(u_5,y_j)\}$ for each $j\in\{1, \cdots, r-2\}$\},

$\mathcal{P''}_{31}=\{P': P'=P-\{P\cap Q\}, P\in \mathcal{P'}_{31}\ and\ Q\in \mathcal{P}_{32}\}$.

$\mathcal{P}(G\boxdot W_2)=\mathcal{P''}_{31} \cup \mathcal{P}_{32} \cup \mathcal{P}_{33}.$

\vspace{1.5mm} Then $p(G\boxdot H_3)\leq \sum_{i=1}^{t_3} p(G \boxdot W_{2_i}) \leq \sum_{i=1}^{t_3} \frac{5r_i}{2} = \frac{5n_{e}^{2}(H)}{2}$ where $r_i$ is the number of paths on $W_{2_i}$.

\vspace{2mm}\noindent{\bf Case 3.} If $m= 4$, then $P_{m}\boxdot W_2$ can be decomposed into $2r$ paths as follows:

\vspace{2mm} When $r$ is even, $\mathcal{P}_{32}=\{(u_3,y_j)(u_1,y_j)P^1_{y_j,y_{j+1}}(u_1,y_{j+1})(u_3,y_{j+1})\} \cup \{(u_2,y_j)(u_4,y_j)\\ P^4_{y_j,y_{j+1}}(u_4,y_{j+1})(u_2,y_{j+1})\}$ for each $j\in \{1,3,\cdots, r-1\}$.

Let $\mathcal{P'}_{31}=\{P': P'=P-\{P\cap Q\}, P\in \mathcal{P}_{31}\ and\ Q\in \mathcal{P}_{32}\}$.

$\mathcal{P}(G\boxdot W_2)=\mathcal{P'}_{31} \cup \mathcal{P}_{32}$.

\vspace{2mm} When $r$ is odd, $\mathcal{P}_{32}=\{(u_2,y_j)(u_4,y_j)P^4_{y_j,y_{j+1}}(u_4,y_{j+1})(u_2,y_{j+1})\}\cup \{(u_3,y_j)(u_1,y_j)\\ P^1_{y_j,y_{j+1}}(u_1,y_{j+1})(u_3,y_{j+1})\}$ for each $j\in \{2,4,\cdots, r-1\}$.

Let $\mathcal{P'}_{31}=\{\{\bigcup_{i=2}^{3} P_{y_{r-1},y_r}^i\}\cup \{P_{y_r,y_1}^1\} \cup \{P_{y_r,y_1}^4\} \cup \{(u_1,y_r)(u_2,y_r),(u_2,y_{r-1})(u_3,y_{r-1}),\\ (u_3,y_r)(u_4,y_r)\}\cup \{(u_1,y_1)(u_3,y_1),(u_2,y_1)(u_4,y_1)\}\}\cup \{\{\bigcup_{i=2}^{3} P_{y_r,y_1}^i\}\cup \{(u_1,y_1)(u_2,y_1),\\ (u_2,y_{r})(u_3,y_{r}), (u_3,y_1)(u_4,y_2)\}\} \cup \{\{\bigcup_{i=1}^{4} P_{y_j,y_{j+1}}^i\} \cup \{(u_1,y_{j+1})(u_2,y_{j+1}),(u_2,y_j)(u_3,y_j),\\ (u_3,y_{j+1})(u_4,y_{j+1})\}$ for each $j\in\{1, \cdots, r-2\}$\},

$\mathcal{P''}_{31}=\{P': P'=P-\{P\cap Q\}, P\in \mathcal{P'}_{31}\ and\ Q\in \mathcal{P}_{32}\}$.

$\mathcal{P}(G\boxdot W_2)=\mathcal{P''}_{31} \cup \mathcal{P}_{32}$.

\vspace{2mm} Then $p(G\boxdot H_3)\leq \sum_{i=1}^{t_3} p(G \boxdot W_{2_i}) \leq \sum_{i=1}^{t_3} 2r_i \leq 2n_{e}^{2}(H)$ where $r_i$ is the number of paths on $W_{2_i}$.

\vspace{2mm} The above evidence shows that $G\Box H$ has a path decomposition $\mathcal{P}(G\Box H)$ with
\begin{eqnarray*}
p(G\Box H)&\leq&p(G\boxdot H_1)+p(G\boxdot H_2)+p(G\boxdot H_3)\\
&\leq&\frac{mn_{o}^{1}(H)}{2}+\frac{m(n_{o}^{2}(H)+n_e^{1}(H))}{2}+\frac{mn_e^{2}(H)}{2}\\
&=& \frac{mn}{2}.
\end{eqnarray*}
\end{proof}

\begin{lemma}\label{3.4}
Let $G$ be a connected graph of order $m$ and $H$ be a connected graph of order $n$. If $G \in \mathcal{G}$, then $p(G\Box H)\leq \frac{mn}{2}$.
\end{lemma}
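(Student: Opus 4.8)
The plan is to follow the same three-part template used in Lemmas \ref{3.1}--\ref{3.3}: decompose $G\boxdot H_1$, $G\boxdot H_2$ and $G\boxdot H_3$ separately and bound each by $\frac{mn_o^1(H)}{2}$, $\frac{m(n_o^2(H)+n_e^1(H))}{2}$ and $\frac{mn_e^2(H)}{2}$, respectively, whose sum is $\frac{mn}{2}$. Concretely, I would realise a member of $\mathcal{G}$ as $G=P_m+\{u_1u_3,\,u_{m-2}u_m\}$ with $P_m=u_1u_2\cdots u_m$ and $m\geq 6$, so that $u_1u_2u_3$ and $u_{m-2}u_{m-1}u_m$ are the two triangles and $u_3u_4\cdots u_{m-2}$ is the connecting path. (If the connecting path is trivial the two triangles share a vertex, and then $G$ is the graph already treated in Lemma \ref{3.1}, so $m\geq 6$ may be assumed.) The only new feature relative to the base decomposition of $P_m\boxdot H$ recalled in Section 3 is that, in every layer $y_j$ whose $H$-vertex is real, the two chords $(u_1,y_j)(u_3,y_j)$ and $(u_{m-2},y_j)(u_m,y_j)$ must now be covered.

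First I would treat an odd-odd path $P_{y_1,y_2}$, for which only the layers $y_1$ and $y_2$ are real. Starting from the standard decomposition $\mathcal{P}_{11}\cup\mathcal{P}_{12}\cup\mathcal{P}_{13}$ into $m$ paths, I would absorb each of the four chords into an incident single-edge path of $\mathcal{P}_{12}$ or $\mathcal{P}_{13}$, replacing, for instance, $(u_1,y_1)(u_2,y_1)$ by $(u_2,y_1)(u_1,y_1)(u_3,y_1)$ and $(u_{m-1},y_1)(u_m,y_1)$ by $(u_{m-1},y_1)(u_m,y_1)(u_{m-2},y_1)$, and symmetrically in the layer $y_2$. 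Since each chord is merged into an existing path, the count stays at $m$, giving $p(G\boxdot H_1)\leq \frac{mn_o^1(H)}{2}$. For a $\mathcal{P}$-$r$-trail $W_1$ the real layers are $y_1,\dots,y_{r+1}$; I would keep the base decomposition $\mathcal{P}_{21}\cup\mathcal{P}_{22}\cup\mathcal{P}_{23}$ (of $m+r-1$ paths), absorb the chords of the two boundary layers $y_1$ and $y_{r+1}$ into modified paths of $\mathcal{P}_{22}$ and $\mathcal{P}_{23}$ exactly as above, and take the $2(r-1)$ chords of the internal layers $y_2,\dots,y_r$ as single-edge paths $\mathcal{P}_{24}$. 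This yields at most $m+3r-3$ paths, and $m+3r-3\leq\frac{m(r+1)}{2}$ holds precisely because $m\geq 6$, so $p(G\boxdot H_2)\leq \frac{m(n_o^2(H)+n_e^1(H))}{2}$. For a $\mathcal{P}$-$r$-closed-trail $W_2$ all $r$ layers are real and cyclically arranged, so I would simply add the $2r$ chords as single-edge paths $\mathcal{P}_{32}$ on top of $\mathcal{P}_{31}$, obtaining $3r\leq\frac{mr}{2}$ paths, again by $m\geq 6$; hence $p(G\boxdot H_3)\leq \frac{mn_e^2(H)}{2}$.

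I expect the argument to be largely routine bookkeeping rather than to hinge on one hard step, and in fact simpler than Lemma \ref{3.3}: because every graph in $\mathcal{G}$ has $m\geq 6$, both counting inequalities $m+3r-3\leq\frac{m(r+1)}{2}$ and $3r\leq\frac{mr}{2}$ hold outright (with equality at $m=6$), so none of the delicate small-order cases ($m=4,5$) that complicated Lemmas \ref{3.2} and \ref{3.3} arise. The one point that needs genuine care is checking that the two end modifications do not interfere: the chord $u_1u_3$ acts on $\{u_1,u_2,u_3\}$ and the chord $u_{m-2}u_m$ on $\{u_{m-2},u_{m-1},u_m\}$, and these vertex sets are disjoint exactly when $m\geq 6$, which is guaranteed by the triangles being vertex-disjoint. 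Beyond this I would verify that the index sets defining $\mathcal{P}_{22}$ and $\mathcal{P}_{23}$ always supply an incident single-edge path to absorb each boundary chord for both parities of $m$, a verification identical in spirit to the one carried out in Lemma \ref{3.1}.
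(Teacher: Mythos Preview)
Your proposal is correct and follows essentially the same approach as the paper: the paper also realises $G$ as $P_m+\{u_1u_3,u_{m-2}u_m\}$ with $m\geq 6$, absorbs the four boundary-layer chords into the single-edge paths of $\mathcal{P}_{12},\mathcal{P}_{13}$ (resp.\ $\mathcal{P}_{22},\mathcal{P}_{23}$), and adds the internal-layer chords as new single-edge paths. Your counts $m+3r-3$ and $3r$ are in fact what the paper's construction produces (the paper's stated figures $m+2r-2$ and $2r$, and its $\mathcal{P}_{24}=(u_1,y_j)(u_3,y_j)(u_5,y_j)$, are evident copy-paste slips from Lemma~\ref{3.1}); either way the required inequalities hold for $m\geq 6$, exactly as you observe.
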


\begin{proof}

Let $G=P_m+ \{u_1u_3, u_{m-2}u_m\}$ where $P_m=u_1u_2\cdots u_m$, $m\geq 6$.

\vspace{2mm}(1) For an odd-odd path $P_{y_1,y_2}$ of $\mathcal{P}(H)$, $G\boxdot P_{y_1,y_2}$ can be decomposed into $m$ paths as follows:

\vspace{2mm} If $m$ is even, then

$\mathcal{P'}_{12}=\{\mathcal{P}_{12} \setminus \{(u_1,y_1)(u_2,y_1), (u_{m-1},y_1) (u_m,y_1)\}\} \cup \{(u_2,y_1)(u_1,y_1)(u_3,y_1), (u_{m-2},y_1)\\ (u_m,y_1)(u_{m-1},y_1)\}$.

$\mathcal{P'}_{13}=\{\mathcal{P}_{13} \setminus \{(u_2,y_2)(u_3,y_2), (u_{m-2},y_2) (u_{m-1},y_2)\}\} \cup \{(u_1,y_2)(u_3,y_2)(u_2,y_2), (u_{m-1},y_2)\\ (u_{m-2},y_2)(u_m,y_2)\}$.

\vspace{2mm} If $m$ is odd, then

$\mathcal{P'}_{12}=\{\mathcal{P}_{12} \setminus \{(u_1,y_1)(u_2,y_1), (u_{m-2},y_1) (u_{m_1},y_1)\}\} \cup \{(u_2,y_1)(u_1,y_1)(u_3,y_1), (u_{m-1},y_1)\\ (u_{m-2},y_1)(u_m,y_1)\}$.

$\mathcal{P'}_{13}=\{\mathcal{P}_{13} \setminus \{(u_2,y_2)(u_3,y_2), (u_{m-1},y_2) (u_m,y_2)\}\} \cup \{(u_1,y_2)(u_3,y_2)(u_2,y_2), (u_{m-1},y_2)\\ (u_m,y_2)(u_{m-2},y_2)\}$

\vspace{2mm} $\mathcal{P}(G\boxdot P_{y_1,y_2})=\mathcal{P}_{11} \cup \mathcal{P'}_{12} \cup \mathcal{P'}_{13}$.

\vspace{2mm} Then $p(G\boxdot H_1)\leq \sum_{i=1}^{t_1} p(G \boxdot P_{y_{1_i},y_{2_i}})= \sum_{i=1}^{t_1} p(G \Box P_2)\leq \frac{mn_{o}^{1}(H)}{2}$.

\vspace{2mm}(2) For a $\mathcal{P}$-$r$-trail $W_1=y_1P_{y_1,y_2}\cdots y_{r}P_{y_{r}y_{r+1}}y_{r+1}$ of $\mathcal{P}(H)$, $G\boxdot W_1$ can be decomposed into $m+2r-2$ paths as follows:

\vspace{2mm} If $m$ is even, then

$\mathcal{P'}_{22}=\{\mathcal{P}_{22} \setminus \{(u_1,y_1)(u_2,y_1), (u_{m-1},y_1) (u_m,y_1)\}\} \cup \{(u_2,y_1)(u_1,y_1)(u_3,y_1), (u_{m-2},y_1)\\ (u_m,y_1)(u_{m-1},y_1)\}$.

$\mathcal{P'}_{23}=\{\mathcal{P}_{23} \setminus \{(u_2,y_{r+1})(u_3,y_{r+1}), (u_{m-2},y_{r+1}) (u_{m-1},y_{r+1})\}\} \cup \{(u_1,y_{r+1})(u_3,y_{r+1})\\ (u_2,y_{r+1}), (u_{m-1},y_{r+1}) (u_{m-2},y_{r+1})(u_m,y_{r+1})\}$.

\vspace{2mm} If $m$ is odd, then

$\mathcal{P'}_{22}=\{\mathcal{P}_{22} \setminus \{(u_1,y_1)(u_2,y_1), (u_{m-2},y_1) (u_{m_1},y_1)\}\} \cup \{(u_2,y_1)(u_1,y_1)(u_3,y_1), (u_{m-1},y_1)\\ (u_{m-2},y_1)(u_m,y_1)\}$.

$\mathcal{P'}_{23}=\{\mathcal{P}_{23} \setminus \{(u_2,y_{r+1})(u_3,y{r+1}), (u_{m-1},y_{r+1}) (u_m,y_{r+1})\}\} \cup \{(u_1,y_{r+1})(u_3,y_{r+1})\\ (u_2,y_{r+1}), (u_{m-1},y_{r+1}) (u_m,y_{r+1})(u_{m-2},y_{r+1})\}$

\vspace{2mm} $\mathcal{P}_{24}=\{(u_1,y_j)(u_3,y_j)(u_5,y_j)\}$ for each $ j \in \{2,3, \cdots, r\}$.

$\mathcal{P}(G\boxdot W_1)=\mathcal{P}_{21} \cup\mathcal{P'}_{22} \cup \{\mathcal{P'}_{13} \cup \mathcal{P}_{24}$.

\vspace{2mm} Then $p(G\boxdot H_2)\leq \sum_{i=1}^{t_2} p(G \boxdot W_{1_i}) \leq \sum_{i=1}^{t_2} (m+2r_i-2)\leq \sum_{i=1}^{t_2}\frac{m(r_i+1)}{2}\leq \frac{m(n_{o}^{2}(H)+n_{e}^{1}(H))}{2}$ where $r_i$ is the number of paths on $W_{1_i}$.

\vspace{2mm}(3) For a $\mathcal{P}$-$r$-closed-trail $W_2=y_1P_{y_1,y_2}\cdots y_{r}P_{y_{r}y_{1}}y_{1}$ of $\mathcal{P}(H)$, $G\boxdot W_2$ can be decomposed into $2r$ paths as follows:

\vspace{2mm} $\mathcal{P}_{32}=\{(u_1,y_j)(u_3,y_j), (u_{m-2},y_j)(u_m,y_j)\}$ for each $ j \in \{1,2, \cdots, r\}$.

$\mathcal{P}(G\boxdot W_2)=\mathcal{P}_{31} \cup \mathcal{P}_{32}$.

\vspace{2mm} Then $p(G\boxdot H_3)\leq \sum_{i=1}^{t_3} p(G \boxdot W_{2_i})\leq \sum_{i=1}^{t_3} 2r_i \leq \frac{mn_{e}^{2}(H)}{2}$ where $r_i$ is the number of paths on $W_{2_i}$.

\vspace{2mm} All in all, we conclude that $G\Box H$ has a path decomposition $\mathcal{P}(G\Box H)$ with

\begin{eqnarray*}
p(G\Box H)&\leq&p(G\boxdot H_1)+p(G\boxdot H_2)+p(G\boxdot H_3)\\
&\leq&\frac{mn_{o}^{1}(H)}{2}+\frac{m(n_{o}^{2}(H)+n_{e}^{1}(H))}{2}+\frac{mn_{e}^{2}(H)}{2}\\
&=& \frac{mn}{2}.
\end{eqnarray*}
\end{proof}

\section{\large Cartesian product of two general graph}

\vspace{3mm} Let $\mathcal{W}$ be the family of all graphs, which are obtained from a cycle and a path by identifying a vertex of the cycle with an end vertex of a separate path $P$.

\begin{lemma}\label{4.1}
Let $G$ be a connected graph of order $m$ and $H$ be a connected graph of order $n$. If $G \in \mathcal{W}$, then $p(G\Box H)\leq \frac{mn}{2}$.
\end{lemma}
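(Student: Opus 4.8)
The plan is to reuse the scheme of Lemmas \ref{3.1}--\ref{3.4}. I fix a path decomposition $\mathcal{P}(H)$ of $H$ as in Lemma \ref{2.1} and split $H$ into the union $H_1\cup H_2\cup H_3$ of its odd-odd paths, its $\mathcal{P}$-$r$-trails and its $\mathcal{P}$-$r$-closed-trails, so that $p(G\Box H)\le p(G\boxdot H_1)+p(G\boxdot H_2)+p(G\boxdot H_3)$. Writing $G\in\mathcal{W}$ as $G=P_m+\{u_1u_k\}$ with $P_m=u_1u_2\cdots u_m$, the cycle is $u_1u_2\cdots u_ku_1$ and $u_ku_{k+1}\cdots u_m$ is the attached path; the only difference between $G\boxdot(\cdot)$ and $P_m\boxdot(\cdot)$ is that each real column $y$ now carries one extra chord $(u_1,y)(u_k,y)$. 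I would therefore start from the explicit decompositions $\mathcal{P}_{11},\mathcal{P}_{12},\mathcal{P}_{13}$ (odd-odd path), $\mathcal{P}_{21},\mathcal{P}_{22},\mathcal{P}_{23}$ ($\mathcal{P}$-$r$-trail) and $\mathcal{P}_{31}$ ($\mathcal{P}$-$r$-closed-trail) recalled in Section 3, and perform local surgery near the columns to absorb these chords.

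The per-component targets are $p(G\boxdot P_{y_1,y_2})\le m$, $p(G\boxdot W_1)\le \frac{m(r+1)}{2}$ and $p(G\boxdot W_2)\le\frac{mr}{2}$; once these hold, summing over all components and invoking the same counting identities used at the end of Lemmas \ref{3.1}--\ref{3.4} gives $p(G\Box H)\le\frac{mn}{2}$. For a $\mathcal{P}$-$r$-trail and a $\mathcal{P}$-$r$-closed-trail the targets exceed the pure-path counts $m+r-1$ and $r$ by the comfortable margins $\frac{(m-2)(r-1)}{2}$ and $\frac{(m-2)r}{2}$, so for $m$ not too small each of the $r+1$ (resp. $r$) chords can be accommodated cheaply, either by rerouting a short vertical path through it or, when the slack allows, by taking it as a separate path; the resulting bookkeeping is exactly of the form already carried out in those lemmas.

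The delicate case is the odd-odd path, where the budget is exactly $m$, i.e. no larger than for $P_m\boxdot P_{y_1,y_2}$, so the two chords $(u_1,y_1)(u_k,y_1)$ and $(u_1,y_2)(u_k,y_2)$ must be absorbed \emph{without} creating a new path. Here I would, in each real column $y_j$, locate the short vertical paths incident with $(u_1,y_j)$ and with $(u_k,y_j)$ and splice them together through the chord, replacing two paths by one and hence not increasing the count. Which short paths are available depends on the parities of $k$ and $m$ --- for instance whether $(u_{k-1},y_j)(u_k,y_j)$ lies in $\mathcal{P}_{12}/\mathcal{P}_{13}$ or inside the long snaking path $\mathcal{P}_{11}$, and the two columns $y_1,y_2$ behave differently --- so a parity case analysis on $k$ and $m$ is unavoidable, with the chord rerouted through $\mathcal{P}_{11}$ whenever the needed edge already sits on it. I expect this surgery, together with the small and degenerate configurations (short cycles, small $m$ in the spirit of the $m=4,5$ cases of Lemmas \ref{3.2}--\ref{3.3}, and the tail-free case $G=C_m$ where the cycle columns are handled by a $C_m\boxdot$-type decomposition rather than inherited from $P_m$), to be the main obstacle; the remaining summation is routine.
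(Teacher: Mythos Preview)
Your plan is correct and is essentially the paper's proof: write $G=P_m+u_1u_l$, start from the $P_m\boxdot(\cdot)$ decompositions of Section~3, and absorb the extra chord $(u_1,y)(u_l,y)$ in each real column by local surgery on the short vertical paths, with the interior columns of a trail or closed trail simply taking the chord as a separate path. The paper's execution is a little simpler than you anticipate --- only the parity of $l$ enters the case split (not $m$), $\mathcal{P}_{11}$ is never rerouted (for $l$ even one column merges two short edges through the chord while the other just adds the chord as a new path, and for $l$ odd each column extends a single short edge by the chord), and no small-$m$ subcases in the style of Lemmas~\ref{3.2}--\ref{3.3} are needed beyond the single degenerate case $l=m=3$, i.e.\ $G=C_3$, which is handled by a direct $C_3\boxdot$ construction.
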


\begin{proof}

Let $G=P_m + u_1u_l$ where $P_m=u_1u_2\cdots u_m$. If $m\geq 4$ and $m\geq l\geq 3$, by a similar argument as in above, we can fined a path decomposition of $G\Box H$.

\vspace{2mm}(1) For an odd-odd path $P_{y_1,y_2}$ of $\mathcal{P}(H)$, we will consider the following two cases.

\vspace{2mm} If $l$ is even, then $G\boxdot P_{y_1,y_2}$ can be decomposed into $m$ paths as follows:

$\mathcal{P}(G\boxdot P_{y_1,y_2})=\mathcal{P}_{11} \cup \{\mathcal{P}_{12} \setminus \{(u_1,y_1)(u_2,y_1),(u_{l-1},y_1)(u_l,y_1)\}\} \cup \{(u_2,y_1)(u_1,y_1)(u_l,y_1)\\ (u_{l-1},y_1)\} \cup \mathcal{P}_{13} \cup (u_1,y_2)(u_l,y_2)$.

\vspace{2mm} If $l$ is odd, then $G\boxdot P_{y_1,y_2}$ can be decomposed into $m$ paths as follows:

$\mathcal{P}(G\boxdot P_{y_1,y_2})=\mathcal{P}_{11} \cup \{\mathcal{P}_{12} \setminus (u_1,y_1)(u_2,y_1)\}
\cup \{(u_2,y_1)(u_1,y_1)(u_l,y_1)\} \cup \{\mathcal{P}_{13} \setminus (u_{l-1},y_2)(u_l,y_2)\} \cup \{(u_1,y_2)(u_l,y_2)(u_{l-1},y_2)\}$

\vspace{2mm} Then $p(G\boxdot H_1)\leq \sum_{i=1}^{t_1} p(G \boxdot P_{y_{1_i},y_{i_2}}) = \sum_{i=1}^{t_1} p(G \Box P_2) \leq \frac{mn_{o}^{1}(H)}{2}$.

\vspace{2mm}(2) For a $\mathcal{P}$-$r$-trail $W_1=y_1P_{y_1,y_2}\cdots y_{r}P_{y_{r}y_{r+1}}y_{r+1}$ of $\mathcal{P}(H)$, we will consider the following two cases.

\vspace{2mm} Let $\mathcal{P}_{24}=\{(u_1,y_j)(u_l,y_j)\}$ for each $ j \in \{2,3, \cdots, r\}$,

\vspace{2mm} If $l$ is even, then $G\boxdot W_1$ can be decomposed into $m+2r-2$ paths as follows:

$\mathcal{P}(G\boxdot W_1)=\mathcal{P}_{21} \cup \{\mathcal{P}_{22} \setminus \{(u_1,y_1)(u_2,y_1),(u_{l-1},y_1)(u_l,y_1)\}\} \cup \{(u_2,y_1)(u_1,y_1)(u_l,y_1)\\ (u_{l-1},y_1)\} \cup \mathcal{P}_{23} \cup (u_1,y_{r+1})(u_l,y_{r+1}) \cup \mathcal{P}_{24}$

\vspace{2mm} If $l$ is odd, then $G\boxdot W_1$ can be decomposed into $m+2r-2$ paths as follows:

$\mathcal{P}(G\boxdot W_1)=\mathcal{P}_{21} \cup \{\mathcal{P}_{22} \setminus (u_1,y_1)(u_2,y_1)\}
\cup \{(u_2,y_1)(u_1,y_1)(u_l,y_1)\} \cup \{\mathcal{P}_{23} \setminus (u_{l-1},y_{r+1})\\ (u_l,y_{r+1})\} \cup \{(u_1,y_{r+1})(u_l,y_{r+1})(u_{l-1},y_{r+1})\} \cup \mathcal{P}_{24}$

\vspace{2mm} Then $p(G\boxdot H_2)\leq \sum_{i=1}^{t_2} p(G \boxdot W_{1_i}) \leq \sum_{i=1}^{t_2}(m+2r_i-2)\leq \sum_{i=1}^{t_2}\frac{m(r_i+1)}{2}\leq \frac{m(n_{o}^{2}(H)+n_{e}^{1}(H))}{2}$ where $r_i$ is the number of paths on $W_{1_i}$.

\vspace{2mm}(3) For a $\mathcal{P}$-$r$-closed-trail $W_2=y_1P_{y_1,y_2}\cdots y_{r}P_{y_{r}y_{1}}y_{1}$ of $\mathcal{P}(H)$, $G\boxdot W_2$ can be decomposed into $2r$ paths as follows:

\vspace{2mm} Let $\mathcal{P}_{32}=\{(u_1,y_j)(u_l,y_j)\}$ for each $ j \in \{1,3, \cdots, r\}$,

\vspace{2mm} $\mathcal{P}(G\boxdot W_2)=\mathcal{P}_{31} \cup \mathcal{P}_{32}$.

\vspace{2mm} Then $p(G\boxdot H_3)\leq \sum_{i=1}^{t_3} p(G \boxdot W_{2_i}) \leq \sum_{i=1}^{t_3}2r_i\leq \sum_{i=1}^{t_3}\frac{m(r_i+1)}{2}\leq \frac{mn_{e}^{2}(H)}{2}$ where $r_i$ is the number of paths on $W_{2_i}$.

\vspace{2mm} For these reasons, we conclude that $G\Box H$ has a path decomposition $\mathcal{P}(G\Box H)$ with
\begin{eqnarray*}
p(G\Box H)&\leq&p(G\boxdot H_1)+p(G\boxdot H_2)+p(G\boxdot H_3)\\
&\leq&\frac{mn_{o}^{1}(H)}{2}+\frac{m(n_{o}^{2}(H)+n_e^{1}(H))}{2}+\frac{mn_e^{2}(H)}{2}\\
&=& \frac{mn}{2}.
\end{eqnarray*}

\vspace{2mm}Now we consider the case of $l=m=3$.

\vspace{2mm}(1) For an odd-odd path $P_{y_1,y_2}$ of $\mathcal{P}(H)$, $G\boxdot P_{y_1,y_2}$ can be decomposed into three paths as follows:

$(u_1,y_1)(u_2,y_1)(u_3,y_1)\cup P_{y_1,y_2}^3 \cup (u_3,y_2)(u_2,y_2)(u_1,y_2)$, $(u_3,y_1)(u_1,y_1)\cup P_{y_1,y_2}^1\cup (u_1,y_2)\\ (u_3,y_2)$ and  $P_{y_1,y_2}^2$.

Then $p(C_3\boxdot H_1)=\sum_{i=1}^{t_1} p(C_3 \boxdot P_{y_{1_i},y_{2_i}})=\frac{n_{o}^{1}(H)}{2} p(C_3 \Box P_2)\leq \frac{3n_{o}^{1}(H)}{2}$.

\vspace{2mm}(2) For a $\mathcal{P}$-$r$-trail $W_1=y_1P_{y_1,y_2}\cdots y_{r}P_{y_{r},y_{r+1}}y_{r+1}$ of $\mathcal{P}(H)$, $G\boxdot W_1$ can be decomposed into $r+2$ paths as follows:

\vspace{2mm} $P_{y_{i},y_{i+1}}^1 (u_1,y_{j+1}) (u_2,y_{j+1}) P_{y_{i+1},y_{i+2}}^2 (u_2,y_{j+2}) (u_3,y_{j+2}) P_{y_{i+2},y_{i+3}}^3 (u_3,y_{j+3})(u_1,y_{j+3})$ for each $ j \in \{1,2, \cdots, r\}$,

$(u_3,y_1)(u_1,y_1)(u_2,y_1)P_{y_{1},y_{2}}^2(u_2,y_2)(u_3,y_2)P_{y_{2},y_{3}}^3(u_3,y_3)(u_1,y_3)$,

and $(u_2,y_1)(u_3,y_1)P_{y_{1},y_{2}}^3(u_3,y_2)(u_1,y_2)$.

\vspace{2mm} Then $p(C_3\boxdot H_2)=\sum_{i=1}^{t_2} p(C_3 \boxdot W_{1_i}) \leq \sum_{i=1}^{t_2}(r_i+2) \leq \sum_{i=1}^{t_2}\frac{3(r_i+1)}{2} = \frac{3(n_{o}^{2}(H)+n_{e}^{1}(H))}{2}$.

\vspace{2mm}(3) For a $\mathcal{P}$-$r$-trail $W_2=y_1P_{y_1,y_2}\cdots y_{r}P_{y_{r}y_1}y_1$ of $\mathcal{P}(H)$, $G\boxdot W_2$ can be decomposed into $r$ paths as follows:

\vspace{2mm} $P_{y_{j},y_{j+1}}^1 (u_1,y_{j+1}) (u_2,y_{j+1}) P_{y_{j+1},y_{j+2}}^2 (u_2,y_{j+2}) (u_3,y_{j+2}) P_{y_{j+2},y_{j+3}}^3 (u_3,y_{j+3})(u_1,y_{j+3})$ for each $ j \in \{1,2, \cdots, r\}$, where the subscripts are taken modular $r$.

\vspace{2mm} Then $p(C_3\boxdot H_3)\leq \sum_{i=1}^{t_3} p(C_3 \boxdot W_{2_i})\leq \sum_{i=1}^{t_3} r_i\leq \frac{3n_{e}^{2}(H)}{2}$.

\vspace{2mm} Similarly, $G\Box H$ has a path decomposition $\mathcal{P}(G\Box H)$ with
\begin{eqnarray*}
p(G\Box H)&=& p(C_3\boxdot H_1)+p(C_3\boxdot H_2)+p(C_3\boxdot H_3)\\
&\leq& \frac{3n_{o}^{1}(H)}{2}+\frac{3(n_{o}^{2}(H)+n_e^{1}(H))}{2}+\frac{3n_{e}^{2}}{2}\\
&=& \frac{mn}{2}.
\end{eqnarray*}
\end{proof}

\begin{corollary}\label{4.2}
Let $m$ be an integer $m\geq 3$ and $H$ be a connected graph of order $n$. If $G=C_{m}\Box H$, then
$p(G)\leq \frac{mn}{2}$.
\end{corollary}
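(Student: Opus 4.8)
The plan is to recognize $C_m$ as the degenerate member of the family $\mathcal{W}$ in which the attached path is trivial, and then to invoke Lemma \ref{4.1}. Writing $C_m = u_1u_2\cdots u_mu_1$, we view it as $P_m + u_1u_l$ with $l = m$, where $P_m = u_1u_2\cdots u_m$; here the pendant path $u_lu_{l+1}\cdots u_m$ has collapsed to the single vertex $u_m = u_l$, so that $C_m$ is obtained from a cycle by attaching a trivial path, i.e. $C_m \in \mathcal{W}$ with $|V(C_m)| = m$. Since Lemma \ref{4.1} bounds $p(G\Box H)$ by $\frac{mn}{2}$ for every $G \in \mathcal{W}$ of order $m$, the corollary should follow at once once we confirm the construction specializes correctly.

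For $m \geq 4$ I would apply the general construction in the proof of Lemma \ref{4.1} verbatim with $l = m$. One checks that each of the three families of paths produced there, for odd-odd paths, for $\mathcal{P}$-$r$-trails, and for $\mathcal{P}$-$r$-closed-trails of $\mathcal{P}(H)$, remains well defined in this specialization: the special edges handled separately, namely the copies of $(u_1,\cdot)(u_2,\cdot)$ and $(u_{l-1},\cdot)(u_l,\cdot)$ together with the chord copies $(u_1,\cdot)(u_l,\cdot)$, all still exist and are pairwise edge-disjoint when $l = m$. Summing the three resulting bounds exactly as in Lemma \ref{4.1} gives
\[
p(C_m\Box H) \leq \frac{m\,n_o^1(H)}{2} + \frac{m\bigl(n_o^2(H)+n_e^1(H)\bigr)}{2} + \frac{m\,n_e^2(H)}{2} = \frac{mn}{2},
\]
using $n_o^1(H)+n_o^2(H)+n_e^1(H)+n_e^2(H) = n$.

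For $m = 3$ the general construction is not available, since $l = m = 3$ is excluded from the range $m \geq 4$ of Lemma \ref{4.1}. Instead I would quote the $l = m = 3$ case treated explicitly at the end of the proof of Lemma \ref{4.1}, which already establishes $p(C_3\Box H) \leq \frac{3n}{2}$. Combining this with the case $m \geq 4$ yields $p(C_m\Box H) \leq \frac{mn}{2}$ for all $m \geq 3$.

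I expect no genuine obstacle: the statement is a direct corollary of Lemma \ref{4.1}, and the only points requiring care are the bookkeeping for the degenerate $l = m$ specialization of the path families and the fact that the triangle $m = 3$ must be imported from the separate case in Lemma \ref{4.1} rather than from its main construction.
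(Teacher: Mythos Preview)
Your proposal is correct and matches the paper's approach exactly: the paper states Corollary~\ref{4.2} immediately after Lemma~\ref{4.1} with no separate proof, since the proof of Lemma~\ref{4.1} already covers the case $l=m$ (the trivial pendant path) for $m\geq 4$ in its general construction and handles $l=m=3$ explicitly at the end. Your identification of $C_m$ as the degenerate member of $\mathcal{W}$ and your case split are precisely what the paper intends.
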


\vspace{1.5mm}Let $\mathcal{F}(G)$ be a family of edge-disjoint virtual-real path and virtual-real cycle of $G$ with
$E(G) = \{\cup_{P\in \mathcal{F}(G)}E(P)\}\cup  \{\cup_{C\in \mathcal{F}(G)}E(C)\}$. Let $v$ be any vertex in $G$. It is assigned to be real on the path in $\mathcal{F}(G)$ with $v$ as an end vertex if $v$ is odd, and is assigned to virtual for the remaining paths and cycles containing it; If $v$ is even, it is assigned to be real on unique a (randomly chosen) path or cycle in $\mathcal{F}(G)$ containing $v$, and is assigned to virtual for the remaining paths and cycles containing it. We say $\mathcal{F}(G)$ is a {\it virtual-real path cycle decomposition} of $G$. Specifically, if there are no virtual-real cycles in $\mathcal{F}(G)$, then $\mathcal{F}(G)$ is called a {\it virtual-real path decomposition}.

\begin{theorem}\label{4.3}
Let $G$ be a connected graph of order $m\geq 2$ and $H$ be a connected graph of order $n$. If $G$ has a virtual-real path cycle decomposition, then $p(G\Box H)\leq \frac{mn}{2}$.
\end{theorem}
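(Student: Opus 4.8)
The plan is to use the given virtual-real path cycle decomposition $\mathcal{F}(G)$ to split $E(G\Box H)$ into edge-disjoint pieces, one for each member of $\mathcal{F}(G)$, and then to bound the number of paths needed for each piece by combining the subdivision lemma with Theorem \ref{1.2} and Corollary \ref{4.2}. For each member $Q\in\mathcal{F}(G)$ (a virtual-real path or cycle) I form the subgraph $Q\boxdot H$. First I would verify that $E(G\Box H)$ is the disjoint union of the sets $E(Q\boxdot H)$ over $Q\in\mathcal{F}(G)$: every ``$G$-edge'' $(u_i,v)(u_j,v)$ with $u_iu_j\in E(G)$ lies in exactly one piece because $\mathcal{F}(G)$ is an edge-disjoint decomposition of $E(G)$, while every ``$H$-edge'' $(u_i,v)(u_i,v')$ with $vv'\in E(H)$ lies in exactly one piece because, by the assignment rule defining $\mathcal{F}(G)$, the vertex $u_i$ is real on precisely one member of $\mathcal{F}(G)$, and the $H$-layer at $u_i$ appears only in that member. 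Consequently $p(G\Box H)\le\sum_{Q\in\mathcal{F}(G)}p(Q\boxdot H)$.

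The key observation is that virtual vertices behave as subdivision points. If $u_i$ is a virtual vertex of a member $Q$, then for every $v\in V(H)$ the vertex $(u_i,v)$ has degree exactly $2$ in $Q\boxdot H$, being joined only to its two neighbours in the $Q$-direction (there is no $H$-layer at $u_i$, since that layer is an independent set). Thus $(u_i,v)$ merely subdivides an edge joining the adjacent layers. Applying Lemma \ref{1} repeatedly to suppress all such degree-$2$ virtual vertices, I would conclude that $Q\boxdot H$ has the same path number as the graph obtained by deleting the virtual layers and joining consecutive real layers directly. For a virtual-real path $Q$ with $k_Q$ real vertices this reduced graph is isomorphic to the ordinary cartesian product $P_{k_Q}\Box H$, and for a virtual-real cycle $Q$ with $k_Q$ real vertices (where $k_Q\ge 3$ by definition) it is $C_{k_Q}\Box H$.

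Having made this reduction, I would invoke the earlier bounds: Theorem \ref{1.2} gives $p(P_{k_Q}\Box H)\le\frac{k_Q n}{2}$ (the ends of a virtual-real path are real, so $k_Q\ge 2$), and Corollary \ref{4.2} gives $p(C_{k_Q}\Box H)\le\frac{k_Q n}{2}$ for $k_Q\ge 3$. Hence $p(Q\boxdot H)\le\frac{k_Q n}{2}$ for every member $Q$. Summing over $\mathcal{F}(G)$ and using that each vertex of $G$ is real on exactly one member, so that the real-vertex counts $k_Q$ sum to $m$, I obtain $p(G\Box H)\le\sum_{Q}\frac{k_Q n}{2}=\frac{n}{2}\sum_{Q}k_Q=\frac{mn}{2}$, as required.

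I expect the main obstacle to be the bookkeeping of the first paragraph — correctly matching each edge of $G\Box H$ to a unique member of $\mathcal{F}(G)$ through the real/virtual assignment — together with making the suppression argument of the second paragraph fully rigorous; in particular one must check that suppressing the virtual layers of a cycle leaves a genuine cycle, which is guaranteed precisely by the requirement that a virtual-real cycle carry at least three real vertices. Once these two points are settled, the final summation is immediate.
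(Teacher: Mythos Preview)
Your proposal is correct and follows essentially the same argument as the paper: decompose $E(G\Box H)$ along the members of $\mathcal{F}(G)$, use Lemma~\ref{1} to suppress the virtual layers so that each piece reduces to $P_{s}\Box H$ or $C_{s}\Box H$, apply Theorem~\ref{1.2} and Corollary~\ref{4.2}, and sum using $\sum_Q k_Q=m$. You are in fact more explicit than the paper about why the pieces are edge-disjoint and why the suppression argument is legitimate, but the approach is identical.
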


\begin{proof}
We assume that graph $G$ can be decomposed into $t_1$ virtual-real paths and $t_2$ virtual-real cycles. Let $\mathcal{F}(G)=\{P_{l_i}^{s_i}: 1\leq i \leq t_1\}\cup \{C_{l_j}^{s_j}: 1\leq j \leq t_2\}$ where $P_{l_i}^{s_i}$ is a virtual-real path with $l_i$ vertices and $s_i$ real vertices and $C_{l_j}^{s_j}$ is a virtual-real cycle with $l_j$ vertices and $s_j$ real vertices. Based on the above facts, it follows that $\sum_{i=1}^{t_1}s_i+\sum_{j=1}^{t_2}s_i=m$. It is obvious that $E(G\Box H)=\{\cup_{i=1}^{t_1} E(P_{l_i}^{s_i}\boxdot H)\} \cup \{\cup_{j=1}^{t_2} E(C_{l_j}^{s_j}\boxdot H)\}$. By Lemma \ref{1}, $p(P_{l_i}^{s_i}\boxdot H)= p(P_{s_i}\Box H)$ and $p(C_{l_i}^{s_i}\boxdot H)= p(C_{s_i}\Box H)$. Moreover, by Theorem \ref{1.2} and Corollary \ref{4.2},

\begin{eqnarray*}
p(G\Box H)&\leq& \sum_{i=1}^{t_1} p(P_{l_i}^{s_i}\boxdot H)+\sum_{j=1}^{t_2} p(C_{l_j}^{s_j}\boxdot H)\\
&=& \sum_{i=1}^{t_1} p(P_{s_i}\Box H)+\sum_{j=1}^{t_2} p(C_{s_j}\Box H) \\
&\leq& \sum_{i=1}^{t_1}\frac{ns_i}{2}+\sum_{j=1}^{t_2}\frac{ns_j}{2}\\
&=& \frac{n}{2}(\sum_{i=1}^{t_1}s_i+\sum_{j=1}^{t_2}s_j)\\
&=& \frac{mn}{2}.
\end{eqnarray*}
\end{proof}

\section{\large Cartesian product of a graph and a unicyclic graph}

Let $C:=u_1u_2u_3 \cdots u_{l}u_1$ be a cycle. For any $1\leq i< j \leq l$, we denote by $C_{u_i,u_j}$ the path $u_iu_{i+1} \cdots u_j$ in $C$, and by $C_{u_j,u_i}$ the path $u_ju_{j+1} \cdots u_1\cdots u_i$ in $C$. The following lemma is what we are going to use.

\begin{theorem} [\cite{Chen2023}] \label{5.1}
For any tree $T$, $p(T)=\frac{n_o(T)}{2}$.
\end{theorem}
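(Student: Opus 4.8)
The plan is to establish the two inequalities $p(T)\ge \frac{n_o(T)}{2}$ and $p(T)\le \frac{n_o(T)}{2}$ separately; the first holds for every graph, and only the second exploits that $T$ is acyclic. For the lower bound I would fix an arbitrary path decomposition $\mathcal{P}$ of $T$ and, for each vertex $v$, let $t(v)$ denote the number of paths of $\mathcal{P}$ having $v$ as an end vertex. Counting the edges incident with $v$ gives $d_T(v)=2a(v)+t(v)$, where $a(v)$ is the number of paths passing through $v$ as an internal vertex; hence $t(v)\equiv d_T(v)\pmod 2$, so every odd vertex satisfies $t(v)\ge 1$. Since each path contributes exactly two end vertices, $\sum_{v} t(v)=2|\mathcal{P}|$, and therefore $2|\mathcal{P}|\ge n_o(T)$, i.e.\ $|\mathcal{P}|\ge \frac{n_o(T)}{2}$. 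Minimizing over $\mathcal{P}$ yields $p(T)\ge\frac{n_o(T)}{2}$.

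For the upper bound I would argue by induction on $e(T)$. If $T$ has no edge, then $n_o(T)=0$ and $p(T)=0$. Otherwise $T$ has at least two leaves, and leaves are odd, so $n_o(T)\ge 2$; pick two distinct odd vertices $u$ and $v$ and let $P_{u,v}$ be the unique path joining them in $T$. Deleting the edges of $P_{u,v}$ leaves a forest $F$ whose components are trees with strictly fewer edges. The parity bookkeeping is the heart of the matter: $u$ and $v$ each lose exactly one incident edge and thus become even, while every internal vertex of $P_{u,v}$ loses exactly two incident edges and keeps its parity, so $n_o(F)=n_o(T)-2$. Applying the induction hypothesis to each component of $F$ and summing produces a path decomposition of $F$ using $\frac{n_o(F)}{2}=\frac{n_o(T)-2}{2}$ paths; adjoining $P_{u,v}$ gives a decomposition of $T$ with $\frac{n_o(T)}{2}$ paths, so $p(T)\le\frac{n_o(T)}{2}$.

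The only place that needs care is exactly this parity count, together with the observation that, because $T$ is a tree, the removed edge set is genuinely a single path and the remainder is again a disjoint union of trees, so the induction applies cleanly to each piece; isolated vertices created by the deletion contribute $0$ to both $p$ and $n_o$ and may be discarded. I expect no serious obstacle beyond this bookkeeping. As an alternative route one could instead invoke the classical fact that a connected graph with $2k$ odd vertices decomposes into $k$ trails and note that in a tree every trail is a path, since a repeated vertex would force a cycle; this immediately gives the upper bound. I would nonetheless favour the inductive argument above because it is self-contained and exhibits the decomposition explicitly.
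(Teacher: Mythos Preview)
Your argument is correct in both directions: the parity lower bound is standard and works for any graph, and the inductive upper bound (as well as the alternative via the trail decomposition of a connected graph with $2k$ odd vertices) is a classical route to this identity. Note, however, that the paper does not actually prove Theorem~\ref{5.1}; it is merely quoted from \cite{Chen2023}, so there is no in-paper proof to compare your approach against.
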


Now we are ready to prove one of our main results.
\begin{theorem}\label{5.2}
Let $G$ be a unicyclic graph of order $m\geq 3$ and $H$ be a connected graph of order $n$. Then $p(G\Box H)\leq \frac{mn}{2}$.
\end{theorem}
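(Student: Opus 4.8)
The plan is to reduce everything to Theorem~\ref{4.3}: it suffices to exhibit a virtual-real path cycle decomposition of the unicyclic graph $G$ for all but a small exceptional family, and to dispose of the exceptional graphs using Lemma~\ref{4.1} and Corollary~\ref{4.2}. Let $C=u_1u_2\cdots u_lu_1$ be the unique cycle of $G$. Deleting $E(C)$ leaves a forest; write $T_i$ for the tree containing $u_i$, so that $G$ is $C$ together with the rooted trees $T_i$. Since each $u_i$ loses exactly two edges, every vertex keeps its parity, and by Theorem~\ref{5.1} each $T_i$ decomposes into $\frac{n_o(T_i)}{2}$ paths in which every vertex that is odd in $G$ is an end of exactly one path and every even vertex is internal.

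First I would treat the case where at least three of $u_1,\dots,u_l$ are even in $G$. Here I keep $C$ as a virtual-real cycle, take the union of the tree decompositions above, declare every odd vertex real at its unique tree-path end, make every remaining even non-cycle vertex real on some tree-path through it, and make three of the even cycle vertices real on $C$. One checks that this is a virtual-real path cycle decomposition: the ends of every path are real, $C$ carries at least three real vertices, and each vertex is real exactly once, so Theorem~\ref{4.3} gives the bound.

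Next I would handle the case in which $C$ can be \emph{opened}, which I claim happens whenever $C$ carries at least two vertices odd in $G$, or carries an even vertex of degree at least four. In either situation one splices the cycle-arcs determined by such vertices onto the tree-paths ending at them, forcing those vertices to become internal on the spliced paths while remaining an end of exactly one other tree-path; this yields a virtual-real path decomposition with no cycle, and Theorem~\ref{4.3} again applies. A short parity count then shows that these two constructions cover every unicyclic graph except when $l=3$ and two triangle vertices have degree two; equivalently $G$ is a triangle with a single tree $T$ attached at a vertex $u_1$ of odd degree. In that case the triangle is forced to occur as a cycle with only its two degree-two vertices eligible to be real, so no virtual-real path cycle decomposition exists.

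For this exceptional family I would take the Theorem~\ref{5.1} decomposition of $T$, let $Q$ be its unique path ending at $u_1$, observe that $C_3\cup Q\in\mathcal{W}$, and bound the triangle-columns of $G\boxdot(\cdot)$ by the $l=m=3$ constructions of Lemma~\ref{4.1} while bounding the remaining tree-columns by Theorem~\ref{1.2}; summing the real-vertex counts should give $p(G\Box H)\le\frac{mn}{2}$. I expect the main obstacle to be precisely this family: because it admits no virtual-real path cycle decomposition, Theorem~\ref{4.3} does not apply directly, and one must splice the triangle-column decomposition of Lemma~\ref{4.1} to the tree-column decomposition along the shared column over $u_1$ without introducing extra paths or double-counting the real vertices on the triangle. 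Checking that the opening construction in the third step produces genuinely simple paths, with each odd vertex an end exactly once, is a secondary but still delicate point.
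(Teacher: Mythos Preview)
Your plan is correct and follows essentially the same route as the paper: reduce to Theorem~\ref{4.3} via a virtual-real path cycle decomposition, and for the residual family attach the tree path ending at the lone odd cycle vertex to $C$ to obtain a member of $\mathcal{W}$, handled by Lemma~\ref{4.1}. The paper's case split is a bit coarser---zero, exactly one, or at least two odd vertices on $C$---so its $\mathcal{W}$-case already covers every $l$ with one odd cycle vertex, whereas your extra opening move at an even cycle vertex of degree $\ge 4$ merely trims the exceptional family down to $l=3$; this is a cosmetic reorganisation, not a different argument. Two small fixes to your wording: in the exceptional case you want the general clause of Lemma~\ref{4.1} (since $C_3\cup Q$ has at least four real vertices), not the $l=m=3$ clause; and opening the cycle at an even vertex requires splitting the tree path passing \emph{through} it, as no tree path ends there.
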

\begin{proof}

Let $C:= u_1u_2u_3 \cdots u_lu_1$, $3\leq l\leq m$ be the cycle of $G$. By Corollary 4.2, the result holds if $l=m$. So, let $m\leq l$. Let $G'=G-E(C)$. It is clearly that $G'$ is a forest. By Theorem \ref{5.1}, there exists a path decomposition $\mathcal{P}(G')$, such that each odd vertex is the end vertex of exactly one path of $\mathcal{P}(G')$ and each even vertex is not end vertex. Therefore, $G'$ has a virtual-real path decomposition $\mathcal{F}(G')$, and for any vertex $u_i\in V(G')$, there exists a virtual-real path $P^i$ of $\mathcal{F}(G')$ such that $u_i$ is real on $P^i$.

Let $S= \{u_i: u_i \in V(C)\ and\ d_G(u_i)> 2\}$ and $\mathcal{L}= \{P^i: u_i \in S\}$. Let $P'^i$ be the path obtained from $P^i$ by changing $u_i$ from real vertex to virtual vertex. Let $\mathcal{L'}= \{P'^i: P^i \in \mathcal{L}\}$. If all vertices on $C$ are even in $G$, then $G$ can be decomposed into $\mathcal{F'}(G')= \{\mathcal{F}(G')-\mathcal{L}\}\cup \mathcal{L'}$ and $C$. By Theorem \ref{4.3}, $p(G\Box H)\leq \frac{mn}{2}$.
Now let's assume that there is at least one odd vertex on $C$.

\vspace{3mm}\noindent{\bf Case 1.} At least two odd vertices in $C$.

\vspace{2mm} Suppose that $u_1$ and $u_2$ of $V(C)$ are odd vertices in $G$. Then $u_1$ and $u_2$ are the end vertices of $P^1$ and $P^2$, respectively. Moreover, we can fined a virtual-real path decomposition $\mathcal{F}(G)=\{\mathcal{F}(G')\setminus \{P^{1},P^{2}\}\}\cup \{P^{1}\cup C_{u_1,u_2},P^{2}\cup C_{u_2,u_1}\}$ of $G$ with $p(G)=p(G')=\frac{n_o(G)}{2}$. By Theorem \ref{1.3}, $p(G\Box H)\leq \frac{mn}{2}$.

\vspace{3mm}\noindent{\bf Case 2.} Only one odd vertex in $C$.

\vspace{2mm} Suppose that $u_1\in V(C)$ is an odd vertex in $G$. Then $G$ can be decomposed into $\mathcal{F'}(G')=\{\mathcal{F}(G') \setminus \{\mathcal{L} \cup P^1\}\} \cup \mathcal{L'}$ and $C'=C \cup P^1$. Suppose the number of real vertices of $C'$ is $s$. By Lemma \ref{4.1} and Theorem \ref{1.3}, we have

\begin{eqnarray*}
p(G\Box H)&\leq& \sum_{P\in \mathcal{F'}(G')} p(P\boxdot H)+p(C'\boxdot H)\\
&\leq& \frac{n(m-s)}{2}+\frac{ns}{2}\\
&=& \frac{mn}{2}.
\end{eqnarray*}
The proof is now finish.
\end{proof}

\section{\large Cartesian product of a graph and a bicyclic graph}

In this section, we discuss the path decomposition of the cartesian product of a graph and a bicyclic graph.
To simplify our statement, we give the following general definitions.

To {\it split} a vertex $v$ is to replace $v$ by two nonadjacent vertices, $v'$ and $v''$, and to replace each edge incident to $v$ by an edge incident to either $v'$ or $v''$ (but not both, unless it is a loop at
$v$), the other end of the edge remaining unchanged.

\begin{theorem}\label{6.1}
Let $G$ be a bicyclic graph of order $m$ and $H$ be a connected graph of order $n$. Then $p(G\Box H)\leq \frac{mn}{2}$.
\end{theorem}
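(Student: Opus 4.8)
The plan is to reduce the bicyclic case to the already-established unicyclic case (Theorem \ref{5.2}) together with the structural lemmas of Section~3 (Lemmas \ref{3.1}--\ref{3.4}), exactly mirroring how Theorem \ref{5.2} was reduced to Theorems \ref{1.3} and \ref{4.3}. A bicyclic graph has $m$ vertices and $m+1$ edges, so it contains exactly two independent cycles, and its cycle structure (the \emph{base} obtained by repeatedly deleting degree-one vertices) falls into a small number of types: either two vertex-disjoint cycles joined by a path, or two cycles sharing exactly one vertex, or a \emph{theta graph} consisting of two vertices joined by three internally disjoint paths. The families $\mathcal{A},\mathcal{B},\mathcal{F},\mathcal{G}$ defined before Lemma \ref{3.1} were evidently introduced precisely to handle these small theta-like and two-cycle configurations, so the first step is to enumerate the possible bases of $G$ and match each one to one of these families (after splitting or after the subdivision reduction of Lemma \ref{1}).

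First I would set $G' = G - E(\text{base})$, which is a forest, and invoke Theorem \ref{5.1} to obtain a path decomposition in which every odd vertex is an end and every even vertex is an internal vertex; this yields a virtual-real path decomposition $\mathcal{F}(G')$ in which each $u_i\in V(G')$ is real on a unique path $P^i$. As in the proof of Theorem \ref{5.2}, I would then re-attach the tree-paths hanging off the base to the base itself, converting the base vertices of high degree from real to virtual along the pendant paths. The key accounting identity to preserve is that the total number of real vertices over all the pieces equals $m$, so that applying Theorem \ref{4.3}-style bounds $p(\cdot\,\Box H)\le \frac{n s_i}{2}$ piece by piece sums to $\frac{mn}{2}$.

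The heart of the argument is the case analysis on the two-cycle base, and here I would distribute the odd vertices on the base carefully. If the base carries at least two odd vertices of $G$ lying on a common cycle, one can route pendant paths along cycle arcs to obtain a genuine virtual-real path decomposition of all of $G$ with $p(G)=\frac{n_o(G)}{2}$, and then Theorem \ref{1.3} finishes immediately. When few odd vertices are available, the base cannot be split into paths alone, and one is forced to keep a cyclic piece; then the relevant piece is (after Lemma \ref{1}) isomorphic to a member of $\mathcal{A},\mathcal{B},\mathcal{F},\mathcal{G}$ or $\mathcal{W}$, and I would apply the corresponding lemma among Lemmas \ref{3.1}--\ref{3.4} and \ref{4.1} to bound $p(\text{piece}\,\Box H)\le \frac{(\text{order})\,n}{2}$. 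Summing the per-piece bounds against the identity $\sum s_i = m$ gives $p(G\Box H)\le \frac{mn}{2}$.

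The main obstacle I anticipate is the theta-graph base with an unfavorable parity of odd vertices: when the two branch vertices and the three connecting paths leave no pair of same-cycle odd end vertices to exploit, one cannot decompose the theta graph into virtual-real paths alone, and the whole point of constructing the families $\mathcal{A},\mathcal{B},\mathcal{F},\mathcal{G}$ is to supply explicit small-order gadgets (such as $\theta$-graphs on $4$, $5$, or more vertices) whose product with $H$ still admits a decomposition meeting the $\frac{mn}{2}$ bound. Verifying that every theta base, after the subdivision reduction of Lemma \ref{1} that contracts long internal paths to the minimal length handled by those lemmas, really does reduce to one of the enumerated families is the delicate bookkeeping step; it is there that I would spend most of the care, checking that subdividing or contracting preserves both connectivity and the real/virtual vertex counts needed for the final summation.
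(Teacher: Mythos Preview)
Your proposal is correct and follows essentially the same route as the paper: strip off the edges of the two-cycle base $D=G[V(C_{l_1})\cup V(C_{l_2})]$, use Theorem~\ref{5.1} on the remaining forest to obtain a virtual-real path decomposition $\mathcal{F}(G')$, then run the case analysis on the three base types (disjoint cycles, one common vertex, theta) together with the parity of the base vertices, finishing each subcase with one of Theorem~\ref{1.3}, Theorem~\ref{4.3}, Lemma~\ref{4.1}, or Lemmas~\ref{3.1}--\ref{3.4} and summing against $\sum s_i=m$. The paper does not formally invoke Theorem~\ref{5.2} but redoes that argument inside the larger case analysis, and your identification of the theta base with unfavorable parity as the crux (handled by the gadget families $\mathcal{A},\mathcal{B},\mathcal{F},\mathcal{G}$) is exactly what drives the longest subcases there.
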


\begin{proof}
Suppose $C_{l_1}:= u_1u_2u_3 \cdots u_{l_1}u_1$ and $C_{l_2}:= v_1v_2v_3 \cdots v_{l_2}v_1$ are the cycles of graph $G$. Let $U=V(C_{l_1})\cup V(C_{l_2})$ and $D=G[U]$. If $G'=G-E(D)$, then $G'$ is a forest. By Theorem \ref{5.1}, there exists a path decomposition $\mathcal{P}(G')$, such that each odd vertex is the end vertex of exactly one path of $\mathcal{P}(G')$ and each even vertex is not end vertex. Therefore, $G'$ has a virtual-real path decomposition $\mathcal{F}(G')$, and for any vertex $u_i\in V(C_{l_1})$, there exists a virtual-real path $P^i$ of $\mathcal{F}(G')$ such that $u_i$ is real on $P^i$, for any vertex $v_j\in V(C_{l_2})$, there exists a virtual-real path $Q^j$ of $\mathcal{F}(G')$ such that $v_j$ is real on $Q^j$. Let $P'^i$ be the path obtained from $P^i$ by changing $x_i$ from real vertex to virtual vertex where $x_i\in U$. Now we consider the path decomposition of edge of $D$.

\vspace{2mm} (1) $|N(C_{l_1}) \cap N(C_{l_2})|=0$.

\vspace{2mm} Let $U'=U-\{u_1,v_1\}$. Let $S= \{x_i: x_i \in U'\ and\ d_{G'}(x_i)\ is\ even\ and\ at \ least\ two\}$, $\mathcal{L}= \{P^i: x_i \in S\}$ and $\mathcal{L'}= \{P'^i: P^i \in \mathcal{L}\}$. Assuming $P_{u_1,v_1}$ is the path connecting $u_1$ and $v_1$ in $G$. Now we consider the following subcases.

\vspace{2mm}\noindent{\bf Case 1.} There exists an odd vertex in $C_{l_1}-u_1$.

\vspace{2mm} Without loss of generality, suppose $u_2$ is the odd vertex in $C_{l_1}-u_1$. We consider the following subcases.

\vspace{2mm}\noindent{\bf Subcase 1.1.} There exists an odd vertex in $C_{l_2}-v_1$, say $v_2$.

\vspace{2mm} If both $d_G(u_1)$ and $d_G(v_1)$ are even, then must exists a path $P_{x,y}$ in $\mathcal{F}(G')$ that contains $P_{u_1,v_1}$, where $u_1$ and $v_1$ are virtual. Then we can obtain a virtual-real path decomposition $\mathcal{F}(G)=\{\mathcal{F}(G')\setminus \{P^2, Q^2, P_{x,y}\}\} \cup \{P^2\cup C_{u_2,u_1} \cup P_{u_1,v_1} \cup C_{v_1,v_2} \cup Q^2\} \cup \{P_{x,u_1} \cup C_{u_1,u_2}\} \cup \{P_{y,v_1} \cup C_{v_2,v_1}\}$. By Theorem \ref{1.3}, $p(G\Box H)\leq \frac{mn}{2}$.

\vspace{2mm} If both $d_G(u_1)$ and $d_G(v_1)$ are odd, then $P_{u_1,v_1}$ ia a path in $\mathcal{F}(G')$. Thus, we can obtain a virtual-real path decomposition $\mathcal{F}(G)=\{\mathcal{F}(G')\setminus \{P^2, Q^2, P_{u_1,v_1}\}\} \cup \{P^2\cup C_{u_1,u_2}\} \cup \{C_{u_2,u_1} \cup P_{u_1,v_1} \cup C_{v_2,v_1}\} \cup \{C_{v_1,v_2} \cup Q^2\}$. By Theorem \ref{1.3}, $p(G\Box H)\leq \frac{mn}{2}$.

\vspace{2mm} Now we consider the case of $d_G(u_1)$ and $d_G(v_1)$ have different parity. Suppose that $d_G(u_1)$ is even and $d_G(v_1)$ is odd, then exists a path $P_{x,v_1}$ in $\mathcal{F}(G')$ that contains $P_{u_1,v_1}$, where $u_1$ is virtual and $v_2$ is real. Then we can obtain two virtual-real paths $P_{x,v_1}\cup C_{v_1,v_2}$ and $Q^2\cup C_{v_2v_1}$. Now we consider the path decomposition of $C_{l_1}$.

\vspace{2mm} If there exists an odd vertex in $C_{l_1}-\{u_1,u_2\}$, say $u_3$, then $P^2 \cup C_{l_1} \cup P^3$ can be decomposed into virtual-real paths $P^2 \cup C_{u_2,u_3}$ and $P^3\cup C_{u_3,u_2}$. By Theorem \ref{1.3}, $p(G\Box H)\leq \frac{mn}{2}$. If there is no odd vertex in $C_{l_1}-u_1$, then $P^2 \cup C_{l_1}$ is an element of $\mathcal{W}$. Thus, $G$ can be decomposed into $\mathcal{F'}(G')=\{\mathcal{F}(G') \setminus \{\mathcal{L}\cup\{P^2, Q^2, P_{x,v_1}\}\}\}\cup \mathcal{L'} \cup \{P_{x,v_1}\cup C_{v_1,v_2}\} \cup \{Q^2\cup C_{v_2v_1}\}$ and $C'_{l_1}=P^2 \cup C_{l_1}$. Suppose the number of real vertices of $C'_{l_1}$ is $s_1$. By Lemma \ref{4.1} and Theorem \ref{1.3}, we have

\begin{eqnarray*}
p(G\Box H)&\leq& \sum_{P\in \mathcal{F'}(G')} p(P\boxdot H)+p(C'_{l_1}\boxdot H)\\
&\leq& \frac{n(m-s_1)}{2}+\frac{ns_1}{2}\\
&=& \frac{mn}{2}.
\end{eqnarray*}

\vspace{2mm} Next let's assume that no odd vertex in $C_{l_2}-v_1$ and consider the following two cases.

\vspace{2mm}\noindent{\bf Subcase 1.2.} There exists an odd vertex in $V(C_{l_1})-\{u_1,u_2\}$, say $u_3$.

\vspace{2mm} If both $d_G(u_1)$ and $d_G(v_1)$ are even, then must exists a path $P_{x,y}$ in $\mathcal{F}(G')$ that contains $P_{u_1,v_1}$, where $u_1$ and $v_1$ are virtual. Thus, we can obtain a virtual-real path cycle decomposition $\mathcal{F}(G)=\{\mathcal{F}(G')\setminus \{\mathcal{L}\cup \{P^2, P^3\}\}\} \cup \mathcal{L} \cup \{P^{2}\cup C_{u_2,u_3}\} \cup \{P^3\cup C_{u_3,u_2}\} \cup C_{l_2}$. By Theorem \ref{4.3}, $p(G\Box H)\leq \frac{mn}{2}$.

\vspace{2mm} If both $d_G(u_1)$ and $d_G(v_1)$ are odd, then $P_{u_1,v_1}$ is a path in $\mathcal{F}(G')$. Then $G$ can be decomposed into $\mathcal{F'}(G')= \{\mathcal{F}(G') \setminus \{\mathcal{L}\cup \{P^2, P^3\}\}\} \cup \mathcal{L'} \cup \{P^2\cup C_{u_2, u_3}\} \cup \{P^3\cup C_{u_3, u_2}\}$ and $C'_{l_2}=P_{u_1,v_1} \cup C_{l_2}$. It is obvious that $C'_{l_2} \in \mathcal{W}$. Suppose the number of real vertices of $C'_{l_2}$ is $s_2$. By Lemma \ref{4.1} and Theorem \ref{1.3}, we have

\begin{eqnarray*}
p(G\Box H)&\leq& \sum_{P\in \mathcal{F'}(G')} p(P\boxdot H)+p(C'_{l_2}\boxdot H)\\
&\leq& \frac{n(m-s_2)}{2}+\frac{ns_2}{2}\\
&=& \frac{mn}{2}.
\end{eqnarray*}

\vspace{2mm} If $d_G(u_1)$ is even and $d_G(v_1)$ is odd, then exists a path $P_{x,v_1}$ in $\mathcal{F}(G')$ that contains $P_{u_1,v_1}$, where $u_1$ is virtual and $v_2$ is real. Hence, $G$ can be decomposed into $\mathcal{F'}(G')=\{\mathcal{F}(G') \setminus \{\mathcal{L}\cup \{P^2, P^3, P_{x,v_1}\}\}\} \cup \mathcal{L'} \cup \{P^2\cup C_{u_2,u_3}\} \cup \{P^3\cup C_{u_3,u_2}\}$ and $C'_{l_2}=P_{x,v_1} \cup C_{l_2}$. It is obvious that $C'_{l_2} \in \mathcal{W}$. Similar to the above, we have $p(G\Box H)\leq \frac{mn}{2}$.

\vspace{2mm} If $d_G(u_1)$ is odd and $d_G(v_1)$ is even, then exists a path $P_{u_1,y}$ in $\mathcal{F}(G')$ that contains $P_{u_1,v_1}$, where $u_1$ is real and $v_2$ is virtual. Thus, we can obtain a virtual-real path cycle decomposition $\mathcal{F}(G)=\{\mathcal{F}(G')\setminus \{\mathcal{L}\cup \{P^2, P^3\}\}\} \cup \mathcal{L'} \cup \{P^2\cup C_{u_2,u_3}\} \cup \{P^3\cup C_{u_3,u_2}\} \cup C_{l_2}$. By Theorem \ref{4.3}, $p(G\Box H)\leq \frac{mn}{2}$.

\vspace{2mm}\noindent{\bf Subcase 1.3.} There exists no odd vertex in $V(C_{l_1})-\{u_1,u_2\}$.

\vspace{2mm} If both $d_G(u_1)$ and $d_G(v_1)$ are even, then exists a path $P_{x,y}$ in $\mathcal{F}(G')$ that  contains $P_{u_1,v_1}$, where $u_1$ and $v_1$ are virtual. Thus, we can obtain a virtual-real path cycle decomposition $\mathcal{F}(G)=\{\mathcal{F}(G')\setminus \{\mathcal{L}\cup \{P^2, P_{x,y}\}\}\} \cup \mathcal{L'} \cup \{P^2\cup C_{u_2,u_1} \cup P_{u_1,y}\} \cup \{C_{u_1,u_2}\cup P_{x,u_1}\} \cup C_{l_2}$.  By Theorem \ref{4.3}, $p(G\Box H)\leq \frac{mn}{2}$.

\vspace{2mm} If both $d_G(u_1)$ and $d_G(v_1)$ are odd, then $P_{u_1,v_1}$ is a path in $\mathcal{F}(G')$. Then $G$ can be decomposed into $\mathcal{F'}(G')= \{\mathcal{F}(G') \setminus \{\mathcal{L}\cup P^2\}\} \cup \mathcal{L'} \cup \{P^2\cup C_{u_1,u_2}\}$ and $C'_{l_2}=C_{u_2,u_1} \cup P_{u_1,v_1} \cup C_{l_2}$. It is obvious that $C'_{l_2} \in \mathcal{W}$. Suppose the number of real vertices of $C'_{l_2}$ is $s_2$. By Lemma \ref{4.1} and Theorem \ref{1.3}, we have

\begin{eqnarray*}
p(G\Box H)&\leq& \sum_{P\in \mathcal{F'}(G')} p(P\boxdot H)+p(C'_{l_2}\boxdot H)\\
&\leq& \frac{n(m-s_2)}{2}+\frac{ns_2}{2}\\
&=& \frac{mn}{2}.
\end{eqnarray*}

\vspace{2mm} If $d_G(u_1)$ is even and $d_G(v_1)$ is odd, then exists a path $P_{x,v_1}$ in $\mathcal{F}(G')$ that contains $P_{u_1,v_1}$, where $u_1$ is virtual and $v_2$ is real. Then $G$ can be decomposed into $\mathcal{F'}(G')=\{\mathcal{F}(G') \setminus \{\mathcal{L}\cup \{P^2, P_{x,v_1}\}\}\} \cup \mathcal{L'}$ and $C'_{l_1}=P^2 \cup C_{l_1}$ and $C'_{l_2}=P_{x,v_1} \cup C_{l_2}$. It is obvious that $C'_{l_2}, C'_{l_2} \in \mathcal{W}$. Similar to the above, we have $p(G\Box H)\leq \frac{mn}{2}$.

\vspace{2mm} If $d_G(u_1)$ is odd and $d_G(v_1)$ is even, then exists a path $P_{u_1,y}$ in $\mathcal{F}(G')$ that contains $P_{u_1,v_1}$, where $u_1$ is real and $v_2$ is virtual. Thus, we can obtain a virtual-real path cycle decomposition $\mathcal{F}(G)=\{\mathcal{F}(G')\setminus \{\mathcal{L}\cup \{P^2, P_{u_1,y}\}\}\} \cup \mathcal{L'} \cup \{P^2\cup C_{u_2,u_1} \cup P_{u_1,y}\} \cup C_{u_1,u_2} \cup C_{l_2}$. By Theorem \ref{4.3}, $p(G\Box H)\leq \frac{mn}{2}$.

\vspace{2mm}\noindent{\bf Case 2.} There exists no odd vertex in $C_{l_1}-u_1$.

\vspace{2mm} If there are two odd vertices in $C_{l_2}-v_1$, then the proof is similar to Subcase 1.2. If there is an odd vertex in $C_{l_2}-v_1$, then the proof is similar to Subcase 1.3. We now suppose there is no odd vertex in $C_{l_2}-v_1$ and consider the following subcases.

\vspace{2mm} If both $d_G(u_1)$ and $d_G(v_1)$ are even, then exists a path $P_{x,y}$ in $\mathcal{F}(G')$ that  contains $P_{u_1,v_1}$, where $u_1$ and $v_1$ are virtual. Then $G$ can be decomposed into $\mathcal{F'}(G')=\mathcal{F}(G') \setminus \{\mathcal{L}\} \cup \mathcal{L'}$, $C_{l_1}$ and $C_{l_2}$. By Theorem \ref{4.3}, $p(G\Box H)\leq \frac{mn}{2}$.

\vspace{2mm} If $d_G(u_1)$ and $d_G(v_1)$ have different parity, suppose that $d_G(u_1)$ is even and $d_G(v_1)$ is odd, then exists a path $P_{x,v_1}$ in $\mathcal{F}(G')$ that contains $P_{u_1,v_1}$, where $u_1$ is virtual and $v_2$ is real. Hence, $G$ can be decomposed into $\mathcal{F'}(G')=\{\mathcal{F}(G') \setminus \{\mathcal{L} \cup P_{x,v_1}\}\} \cup \mathcal{L'} \cup C_{l_1}$ and $C'_{l_2}=P_{x,v_1} \cup C_{l_2}$. It is obvious that $C'_{l_2} \in \mathcal{W}$. Suppose the number of real vertices of $C'_{l_2}$ is $s_2$. By Lemma \ref{4.1} and Theorem \ref{4.3}, we have

\begin{eqnarray*}
p(G\Box H)&\leq& \sum_{P\in \mathcal{F'}(G')} p(P\boxdot H)+p(C'_{l_2}\boxdot H)\\
&\leq& \frac{n(m-s_2)}{2}+\frac{ns_2}{2}\\
&=& \frac{mn}{2}.
\end{eqnarray*}

\vspace{2mm} If both $d_G(u_1)$ and $d_G(v_1)$ are odd, then $P_{u_1,v_1}$ is a path in $\mathcal{F}(G')$. If $max\{l_1,l_2\}\geq 4$, suppose $l_1\geq 4$, then $C_1$ is a virtual-real cycle where $u_1$ is virtual. Then $G$ can be decomposed into $\mathcal{F'}(G')=\{\mathcal{F}(G') \setminus \{\mathcal{L}\cup P_{x,v_1}\}\} \cup \mathcal{L'} \cup C_{l_1}$ and $C'_{l_2}=P_{x,v_1} \cup C_{l_2}$. Similar to the above, we have $p(G\Box H)\leq\frac{mn}{2}$.

Now we consider the case of $l_1=l_2=3$. Let $\mathcal{F'}(G')=\{\mathcal{F}(G') \setminus \{\mathcal{L} \cup P_{u_1,v_1}\}\} \cup \mathcal{L'}$ and $D'=C_{l_1}\cup P_{u_1,v_1} \cup C_{l_2}$. Clearly, $D'\in \mathcal{G}$. Suppose the number of real vertices of $D'$ is $s$. By Lemma \ref{3.4} and Theorem \ref{1.3}, we have

\begin{eqnarray*}
p(G\Box H)&\leq& \sum_{P\in \mathcal{F'}(G')} p(P\boxdot H)+p(D'\boxdot H)\\
&\leq& \frac{n(m-s)}{2}+\frac{ns}{2}\\
&=& \frac{mn}{2}.
\end{eqnarray*}

\vspace{2mm} (1) $|N(C_{l_1}) \cap N(C_{l_2})|=1$

\vspace{2mm} Suppose $u_1=v_1$ and $U'=U-u_1$. Let $S= \{x_i: x_i \in U'\ and\ d_{G'}(x_i)\ is\ even\ and\ at \ least\\ two\}$, $\mathcal{L}= \{P^i: x_i \in S\}$ and $\mathcal{L'}= \{P'^i: P^i \in \mathcal{L}\}$. Now we consider the following cases.

\vspace{2mm}\noindent{\bf Case 1.} There exists an odd vertex in $V(C_{l_1})-u_1$.

\vspace{2mm} Without loss of generality, suppose $u_2$ is the odd vertex in $C_{l_1}-u_1$. We consider the following subcases.

\vspace{2mm}\noindent{\bf Subcase 1.1.} There exists an odd vertex in $C_{l_2}-v_1$, say $v_2$.

\vspace{2mm} We can obtain a virtual-real path decomposition $\mathcal{F}(G)=\{\mathcal{F}(G')\setminus \{P^2, Q^2\}\} \cup \{P^2\cup C_{u_2u_1}\cup C_{u_1v_2}\} \cup \{Q^2\cup C_{v_2v_1}\cup C_{u_1u_2}\}$ of $G$. By Theorem \ref{1.3}, $p(G\Box H)\leq \frac{mn}{2}$.

\vspace{2mm} Next let's assume that no odd vertex in $C_{l_2}-v_1$ and consider the following two cases.

\vspace{2mm}\noindent{\bf Subcase 1.2.} There exists an odd vertex in $C_{l_1}-\{u_1,u_2\}$, say $u_3$.

\vspace{2mm} Let $G''=G- E(C_{l_2})$. Then we can obtain a virtual-real path decomposition $\mathcal{F}(G'')=\{\mathcal{F}(G')\setminus \{P^{2}, P^3\}\} \cup \{P^{2}\cup C_{u_2,u_3}\} \cup \{P^3\cup C_{u_3,u_2}\}$ of $G''$. Next we focus on the decomposition of $C_{l_2}$.

\vspace{2mm} If $d_{G'}(v_1)$ is even, then there are at least three even vertices on $C_{l_2}$.
Hence, we can obtain a virtual-real path cycle decomposition $\mathcal{F}(G)=\{\mathcal{F}(G'') \setminus \{\mathcal{L}\cup P^1\}\} \cup \{\mathcal{L'}\cup P'^1\} \cup C_{l_2}$ of $G$. By Theorem \ref{4.3}, $p(G\Box H)\leq \frac{mn}{2}$.

\vspace{2mm} If $d_{G'}(u_1)$ is odd, then $u_1$ is the end vertex of $P^1$. If $l_2\geq 4$, similar to above, then $C_{l_2}$ is a virtual-real cycle with at least three real vertices. Hence, $G\Box H$ has we desirous path decomposition. If $l_2= 3$, then $G$ can be decomposed into $\mathcal{F'}(G'')=\mathcal{F}(G'') \setminus \{\mathcal{L}\cup P^1\} \cup \mathcal{L'}$ and $C'_{l_2}=C_{l_2}\cup P^1$. It is clear that $C'_{l_2}\in \mathcal{W}$. Suppose the number of real vertex of $C'_{l_2}$ is $s_2$. By Lemma \ref{4.1} and Theorem \ref{4.3}, we have

\begin{eqnarray*}
p(G\Box H)&\leq& \sum_{P\in \mathcal{F'}(G'')} p(P\boxdot H)+p(C'_{l_2}\boxdot H)\\
&\leq& \frac{n(m-s_2)}{2}+\frac{ns_2}{2}\\
&=& \frac{mn}{2}.
\end{eqnarray*}

\vspace{2mm}\noindent{\bf Subcase 1.3.} There exists no odd vertex in $C_{l_1}-\{u_1,u_2\}$.

\vspace{2mm} If $l_2\geq 4$, similar to above, then $C_{l_2}$ is a cycle with at least three real vertices. Let $G''= G' + E(C_{l_2})$. Now we consider the decomposition of $C_{l_1}$.

\vspace{2mm} If $d_G(u_1)$ is odd, then we can obtain a virtual-real path cycle decomposition $\mathcal{F}(G)=\{\mathcal{F}(G') \setminus \{\mathcal{L} \cup \{P^1,P^2\}\}\} \cup \mathcal{L'} \cup \{P^1\cup C_{u_1,u_2}\} \cup \{P^2\cup C_{u_2,u_1}\} \cup C_{l_2}$ of $G$. By Theorem \ref{4.3}, $p(G\Box H)\leq \frac{mn}{2}$. If $d_G(u_1)$ is even, then we can obtain $C'_{l_1}=C_{l_1}\cup P^2$ which belong in $\mathcal{W}$. Then $G$ can be decomposed into $\mathcal{F'}(G'')=\{\mathcal{F}(G')\setminus \{\mathcal{L}\cup P^2\}\} \cup \mathcal{L'} \cup C_{l_2}$ and $C'_{l_1}$. Suppose the number of real vertices of $C'_{l_1}$ is $s_1$. By Lemma \ref{4.1} and Theorem \ref{4.3}, we have

\begin{eqnarray*}
p(G\Box H)&\leq& \sum_{P\in \mathcal{F'}(G'')} p(P\boxdot H)+p(C'_{l_1}\boxdot H)\\
&\leq& \frac{n(m-s_1)}{2}+\frac{ns_1}{2}\\
&=& \frac{mn}{2}.
\end{eqnarray*}

Now we consider the case of $l_2= 3$. If $l_1\geq 4$, then we can obtain $C'_{l_1}=C_{l_1}\cup P^2$ where $u_1$ is virtual. If $d_G(u_1)$ is even, then $C_{l_2}$ is a cycle with three real vertices. If $d_G(u_1)$ is odd, then we can obtain $C'_{l_2}=C_{l_2}\cup P^1$. It is clear that $C'_{l_1}, C'_{l_2}\in \mathcal{W}$. Similar to the above, $p(G\Box H)\leq \frac{mn}{2}$.

\vspace{2mm} If $l_1=3$ and $d_G(u_1)$ is odd, then $G$ can be decomposed into $\mathcal{F'}(G')=\{\mathcal{F}(G')\setminus \{\mathcal{L} \cup \{P^2, P^1\}\}\} \cup \mathcal{L'} \cup \{P^2 \cup C_{u_2,u_1} \cup P^1\}$ and $C'_{l_2}= C_{u_1,u_2} \cup C_{l_2}$. It is clear that $C'_{l_2} \in \mathcal{W}$. Similar to the above, $p(G\Box H)\leq \frac{mn}{2}$. If $l_1=3$ and $d_G(u_1)$ is even, then $D\cup P^2$ is a member of $\mathcal{A}$. Thus, $G$ can be decomposed into $\mathcal{F'}(G')=\{\mathcal{F}(G')\setminus \{\mathcal{L} \cup \{P^1\cup P^2\}\}\} \cup \{\mathcal{L'} \cup P'^1\}$ and $D'=D\cup P^2$. Suppose the number of real vertices of $D'$ is $s$. By Lemma \ref{3.1} and Theorem \ref{4.3}, we have

\begin{eqnarray*}
p(G\Box H)&\leq& \sum_{P\in \mathcal{F'}(G')} p(P\boxdot H)+p(D' \boxdot H)\\
&\leq& \frac{n(m-s)}{2}+\frac{ns}{2}\\
&=& \frac{mn}{2}.
\end{eqnarray*}

\vspace{2mm}\noindent{\bf Case 2.} There exists no odd vertex in $C_{l_1}-u_1$.

\vspace{2mm} If there are two odd vertices in $C_{l_2}-\{v_1\}$, then the proof is similar to Subcase 1.2. If there is an odd vertex in $C_{l_2}-\{v_1\}$, then the proof is similar to Subcase 1.3. We now suppose there is no odd vertex in $C_{l_2}-\{v_1\}$ and consider the following subcases.

\vspace{2mm}\noindent{\bf Subcase 2.1.} There exist $u_i, v_j\in U'$ such that $d_{G'}(u_i)\geq 2$ and $d_{G'}(v_j)\geq 2$.

\vspace{2mm} It is fact that $u_i$ and $v_j$ are internal vertex of $P^i$ and $Q^j$ of $\mathcal{P}(G)$, respectively. Suppose that $P^i=P_{x,u_i}\cup P_{u_i,y}$, where $x$ and $y$ are the end vertices of $P^i$ and $P_{x,u_i}$ and $P_{y,u_i}$ are the subpaths of $P^i$. Similarly, $Q^j=Q_{z,v_j}\cup Q_{w,v_j}$. Now we obtain a virtual-real path decomposition $\mathcal{F}(G)=\{\mathcal{F}(G') \setminus \{P^{i}, Q^{j}\}\} \cup \{P_{x,u_i}\cup C_{u_i,u_1}\cup C_{u_1,v_j}\cup Q_{z,v_j}\} \cup \{P_{y,u_i}\cup C_{u_1,u_i}\cup C_{v_j,u_1}\cup Q_{w,v_j}\}$ of $G$. By Theorem \ref{1.3}, $p(G\Box H)\leq \frac{mn}{2}$.

\vspace{2mm}\noindent{\bf Subcase 2.2.} There exist no $u_i, v_j\in U'$ such that $d_{G'}(u_i)\geq 2$ and $d_{G'}(v_j)\geq 2$.

\vspace{2mm} First, we consider the case of $max\{l_1,l_2\}\geq 4$. Without loss of generality, we suppose that $l_1\geq 4$. It is clear that $C_{l_1}$ is a virtual-real cycle with $u_1$ being virtual. If $u_1$ is even in $G$, then $C_{l_2}$ also a virtual-real cycle with $u_1$ being real. By Theorem \ref{4.3}, $p(G\Box H)\leq \frac{mn}{2}$. Otherwise, $C_{l_2}\cup P^1$ is a member of $\mathcal{W}$. By Lemma \ref{4.1} and Theorem \ref{4.3}, $p(G\Box H)\leq \frac{mn}{2}$.

\vspace{2mm} Now we consider the case of $l_1=l_2= 3$. If $u_1$ is even in $G$, then $D\in \mathcal{A}$. Let $\mathcal{F'}(G')=\{\mathcal{F}(G')\setminus \{\mathcal{L}\cup P^1\}\} \cup \{\mathcal{L'}\cup P'^1\}$. By Lemma \ref{3.1} and Theorem \ref{4.3}, We have

\begin{eqnarray*}
p(G\Box H)&\leq& \sum_{P\in \mathcal{F'}(G')} p(P\boxdot H)+p(D \boxdot H)\\
&\leq& \frac{n(m-5)}{2}+\frac{5n}{2}\\
&=& \frac{mn}{2}.
\end{eqnarray*}

If $u_1$ is odd in $G$, then $u_1$ is virtual on $D$. To split $u_1$ to $u_{11}$ and $u_{12}$, then both of them are virtual. We can obtain a virtual-real cycle $C'=u_2u_3u_{11}v_3v_2u_{12}$, where $u_{11}$ and $u_{12}$ are virtual. By Lemma \ref{1} and Corollary \ref{4.2}, we can obtain a path decomposition of $C'\boxdot H$ with $p(C'\boxdot H)=p(C_4\Box H)$. By Lemma \ref{4.1} and Theorem \ref{1.3}, $p(G\Box H)\leq \frac{mn}{2}$.

\vspace{2mm}(2) $|N(C_{l_1}) \cap N(C_{l_2})|\geq 2$.

\vspace{2mm} Let $Q_1:= u_1u_2\cdots u_t=v_1v_2\cdots v_t$, where $2 \leq t \leq min\{l_1, l_2\}$ be the shortest path from $u_1$ to $u_t$ in $G$. Then $u_1, \cdots, u_t$ are the common vertices of $C_{l_1}$ and $C_{l_2}$. Let $U'=U-\{u_1u_2\cdots u_t\}$. Let $S= \{x_i: x_i \in U'\ and\ d_{G'}(x_i)\ is\ even\ and\ at \ least\ two\}$, $\mathcal{L}= \{P^i: x_i \in S\}$ and $\mathcal{L'}= \{P'^i: P^i \in \mathcal{L}\}$. We consider the following subcases.

\vspace{3mm}\noindent{\bf Subcase 2.1.} All vertices of $U'$ are even in $G$.

\vspace{2mm} Let $C=u_{t}u_{t+1}\cdots u_{l_1}u_1v_{l_2}v_{l_2-1} \cdots v_{t+1}$. If both $u_1$ and $u_t$ are odd in $G'$, then we can obtain a new virtual-real path $Q'_1=P^1\cup Q_1\cup P^t$, where $u_1$ and $u_t$ are virtual. Then we can obtain a virtual-real path cycle decomposition $\mathcal{F}(G)=\{\mathcal{F}(G') \setminus \{\mathcal{L} \cup \{P^1, P^t\}\}\}\cup \mathcal{L'}\cup Q'_1\cup C$ of $G$. By Theorem \ref{4.3}, $p(G\Box H)\leq \frac{mn}{2}$.

\vspace{2mm} If only one of $u_1$ and $u_t$ is odd in $G'$, suppose $u_1$ is odd, then we can obtain a new virtual-real path $Q'_1=P^1\cup Q_1$, where $u_1$ virtual and $u_t$ is real. Then $\mathcal{F}(G)= \{\mathcal{F}(G') \setminus \{\mathcal{L} \cup \{P^1,P^t\}\}\}\cup \mathcal{L'}\cup Q'_1\cup P'^t\cup C$. By Theorem \ref{4.3}, $p(G\Box H)\leq \frac{mn}{2}$.

\vspace{2mm} If both $u_1$ and $u_t$ are even in $G'$, then we will consider the following two cases. If $|V(D)-V(Q_1)|\geq 3$, then $Q_1$ is a new path where $u_1$ and $u_t$ are real, and $C$ is a cycle with at least three real vertices. Then $\mathcal{F}(G)=\{\mathcal{F}(G') \setminus \{\mathcal{L} \cup \{P^1, P^t\}\}\cup \{\mathcal{L'}\cup \{P'^1, P'^t\}\}\cup Q_1\cup C$. By Theorem \ref{4.3}, $p(G\Box H)\leq \frac{mn}{2}$. If $|V(D)-V(Q_1)|=2$, then $D\in \{\mathcal{B}, \mathcal{F}\}$. Let $\mathcal{F'}(G')=\{\mathcal{F}(G') \setminus \{\mathcal{L} \cup \{P^1, P^t\}\}\cup \{\mathcal{L'}\cup \{P'^1, P'^t\}\}$. By lemma \ref{3.2}, lemma \ref{3.3} and Theorem \ref{4.3}, we have

\begin{eqnarray*}
p(G\Box H)&\leq& \sum_{P\in \mathcal{F'}(G')} p(P\boxdot H)+p(D\boxdot H)\\
&\leq& \frac{n(m-|D|)}{2}+\frac{n|D|}{2}\\
&=& \frac{mn}{2}.
\end{eqnarray*}

\vspace{3mm}\noindent{\bf Subcase 2.2.} At least two odd vertices in $U'$.

\vspace{2mm} Without loss of generality, we suppose $u_{l_1}$ and $v_{l_2}$ are odd in $G$. If both $u_1$ and $u_t$ are odd in $G'$, then we can obtain a new virtual-real path $Q'_1=P^1\cup Q_1\cup P^t$ where $u_1$ and $u_t$ are virtual. Then we can fined a virtual-real path decomposition $\mathcal{F}(G)=\{\mathcal{F}(G') \setminus \{P^{l_1}, Q^{l_2}, P^1, P^t\}\} \cup \{P^{l_1}\cup C_{u_{l_1},u_1}\cup C_{v_{l_2},u_1}\} \cup \{Q^{l_2}\cup C_{v_t,v_{l_2}}\cup C_{u_t,u_{l_1}}\} \cup Q'_1$ of $G$. By Theorem \ref{1.3}, $p(G\Box H)\leq \frac{mn}{2}$.

\vspace{2mm} If only one of $u_1$ and $u_t$ is odd in $G'$, we suppose $u_1$ is odd, then we can obtain a new virtual-real path $Q'_1=P^1\cup Q_1$ where $u_1$ virtual is $u_t$ is real. Then  $\mathcal{F}(G)=\{\mathcal{F}(G') \setminus \{P^{l_1}, Q^{l_2}, P^1\}\} \cup \{P^{l_1}\cup C_{u_{l_1},u_1}\cup C_{v_{l_2},u_1}\} \cup \{Q^2\cup C_{v_t,v_{l_2}}\cup C_{u_t,u_{l_1}}\} \cup Q'_1$. By Theorem \ref{1.3}, $p(G\Box H)\leq \frac{mn}{2}$.

\vspace{2mm} If both $u_1$ and $u_t$ are even in $G'$, then $Q_1$ is a new path. Then $\mathcal{F}(G)=\{\mathcal{F}(G') \setminus \{P^{l_1}, Q^{l_2}\}\} \cup \{P^{l_1}\cup C_{u_{l_1},u_1}\cup C_{v_{l_2},u_1}\} \cup \{Q^{l_2}\cup C_{v_t,v_{l_2}}\cup C_{u_t,u_{l_1}}\} \cup Q_1$. By Theorem \ref{1.3}, $p(G\Box H)\leq \frac{mn}{2}$.

\vspace{3mm}\noindent{\bf Subcase 2.3.} Only one odd vertex in $U'$.

\vspace{2mm} Without loss of generality, we suppose $u_{l_1}$ is odd in $G'$. If both $u_1$ and $u_t$ are odd in $G'$, then we can fined a virtual-real path decomposition $\mathcal{F}(G)=\{\mathcal{F}(G') \setminus \{P^1, P^t, P^{l_1}\}\} \cup \{P^1\cup C_{u_1,u_{l_1}}\} \cup \{P^{l_1}\cup C_{u_{l_1},u_1}\cup C_{v_t,v_1}\cup P^t\}$. By Theorem \ref{1.3}, $p(G\Box H)\leq \frac{mn}{2}$.

\vspace{1.5mm} If only one of $u_1$ and $u_t$ is odd in $G'$, suppose $u_1$ is odd in $G$, then we can fined a virtual-real path decomposition $\mathcal{F}(G)=\{\mathcal{F}(G') \setminus \{P^1, P^{l_1}\}\} \cup \{P^1\cup C_{u_1,u_{l_1}}\} \cup \{P^{l_1}\cup C_{u_{l_1},u_1}\cup C_{v_t,v_1}\}$. By Theorem \ref{1.3}, $p(G\Box H)\leq \frac{mn}{2}$.

\vspace{1.5mm} If both $u_1$ and $u_t$ are even in $G'$, then $Q_1$ is a new path. Let $G''=G'+Q_1$ and $D'=\{D\setminus Q_1\}\cup P^{l_1}$. If $|V(D)-V(Q_1)|\geq 3$, then $D'\in \{\mathcal{W}\}$. Let $\mathcal{F}(G'')=\{\mathcal{F}(G') \setminus \{\mathcal{L} \cup P^{l_1}\}\} \cup \{\mathcal{L'}\} \cup Q_1$. Then the graph $G$ can been decompose into $\mathcal{F}(G'')$ and $D'$. Suppose the number of real vertices of $D'$ is $s$, by Lemma \ref{4.1} and Theorem \ref{1.3}, we have

\begin{eqnarray*}
p(G\Box H)&\leq& \sum_{P\in \mathcal{F}(G'')} p(P\boxdot H)+p(D'\boxdot H)\\
&\leq& \frac{n(m-s)}{2}+\frac{ns}{2}\\
&=& \frac{mn}{2}.
\end{eqnarray*}

If $|V(D)-V(Q_1)|= 2$, then $D\cup P^{l_1}\in \{\mathcal{B}, \mathcal{F}\}$. Let $D''=D\cup P^{l_1}$. Then $G$ can be decomposed into $\mathcal{F'}(G')=\{\mathcal{F}(G')\setminus \{\mathcal{L}\cup P^{l_1}\}\} \cup \mathcal{L'}$ and $D''$. Suppose the number of real vertices of $D''$ is $s$, by Lemma \ref{3.2}, Lemma \ref{3.3} and Theorem \ref{1.3}, we have

\begin{eqnarray*}
p(G\Box H)&\leq& \sum_{P\in \mathcal{F'}(G')} p(P\boxdot H)+p(D''\boxdot H)\\
&\leq& \frac{n(m-s)}{2}+\frac{ns}{2}\\
&=& \frac{mn}{2}.
\end{eqnarray*}
\end{proof}

\noindent {\bf Data availability statement}

\vspace{2mm}\noindent Data sharing not applicable to this article as no data sets were generated or analysed during the current study.

\vspace{3mm}\noindent {\bf Declarations of competing interest}

The authors declare that they have no known competing financial interests or personal relationships that could have appeared to influence the work reported in this paper.


\begin{thebibliography}{99}

\bibitem{Anderson2022} S.E. Anderson, K. Kuenzel, Power domination in cubic graphs and Cartesian products,  Discrete Math. 345 (2022) 113113.

\bibitem{Azari2023} M. Azari, N. Dehgardi, T. Do\v{s}li\'{c}, Lower bounds on the irregularity of trees and unicyclic graphs, Discrete Appl. Math. 324 (2023) 136-144.

\bibitem{Barik2023} S. Barik, S. Behera, On the smallest positive eigenvalue of bipartite unicyclic graphs with a unique perfect matching, Discrete Math. 346 (2023) 113252.

\bibitem{Bonamy2019} M. Bonamy, T.J. Perrett, Gallai's path decomposition conjecture for graphs of small maximum degree, Discrete Math. 342 (2019) 1293-1299.

\bibitem{Bondy} J.A. Bondy, U.S.R. Murty, Graph Theory, Graduate Texts in Mathematics, 2008.

\bibitem{Botler2017} F. Botler, A. Jim\'{e}nez, On path decompositions of $2k$-regular graphs, Discrete Math. 340 (2017) 1405-1411.

\bibitem{Botler2019} F. Botler, A. Jim\'{e}nez, M. Sambinelli, Gallai's path decomposition conjecture for triangle-free planar graphs, Discrete Math. 342 (2019) 1403-1414.

\bibitem{Botler2020(1)} F. Botler, M. Sambinelli, Towards Gallai's path decomposition conjecture, J. Graph Theory 97(1) (2020) 161-184.

\bibitem{Botler2020(2)} F. Botler, M. Sambinelli, R.S. Coelho, O. Lee, Gallai's path decomposition
    conjecture for graphs with treewidth at most 3, J. Graph Theory 93(3) (2020) 328-349.

\bibitem{Chen2023} X. Chen, B. Wu, Galli's path decomposition conjecture for cartesian product of graphs, 2023, submitted for publication. ArXiv e-prints arXiv: 2310.11189.

\bibitem{Chu2021} Y. Chu, G. Fan, Q. Liu, On Gallai's conjecture for graphs with maximum degree 6, Discrete Math.
    344(2) (2021) 112212.

\bibitem{Chu2022} Y. Chu, G. Fan, C. Zhou, Path decomposition of triangle-free graphs, Discrete Math. 345 (2022) 112866.

\bibitem{Dean2000} N. Dean, M. Kouider, Gallai's conjecture for disconnected graphs, Discrete Math. 213 (2000) 43-54.

\bibitem{Donald1980} A. Donald, An upper bound for the path number of a graph, J. Graph Theory 4 (1980) 189-201.

\bibitem{Eliasi2022} M. Eliasi, Unicyclic and bicyclic graphs with maximum exponential second Zagreb index, Discrete Appl. Math. 307 (2022) 172-179.

\bibitem{Ezhilarasi2021} A.P. Ezhilarasi, A. Muthusamy, Decomposition of Cartesian product of complete graphs into paths and stars with four edges, Comment. Math. Univ. Carolin. 62 (3) (2021) 273-289.

\bibitem{Fan2005} G. Fan, Path decompositions and Gallai's conjecture, J. Combin. Theory Ser. B 93 (2) (2005) 117-125.

\bibitem{Geng2015} X. Geng, M. Fang, D. Li, Gallai's conjecture for outerplanar graphs, J. Interdiscip. Math. 18 (5) (2015) 593-598.

\bibitem{Hammack2011} R. Hammack, W. Imrich, S. Klav\v{z}ar, Handbook of product graphs, Discrete Mathematics and its Applications (Boca Raton). CRC Press, Boca Raton, FL, 2011.

\bibitem{Harding2014} P. Harding, S. McGuinness, Gallai's conjecture for graphs of girth at least four, J. Graph Theory 75 (2014) 256-274.

\bibitem{Imrich2007} W. Imrich, I. Peterin, Recognizing Cartesian products in linear time, Discrete Math. 307 (2007) 472-483.

\bibitem{Imrich2008} W. Imrich, S. Klav\v{z}ar, D.F. Rall, Topics in Graph Theory: Graphs and Their Cartesian Product, A K Peters, Wellesley, MA, 2008.

\bibitem{Jeevadoss2016} S. Jeevadoss, A. Muthusamy, Decomposition of product graphs into paths and cycles of length four, Graphs Combin. 32 (2016) 199-223.

\bibitem{Jimenez2017} A. Jim\'{e}nez, Y. Wakabayashi, On path-cycle decompositions of triangle-free graphs, Discrete Math. Theor. Comput. Sci. 19 (3) (2017) Paper No. 7, 21.

\bibitem{Kaul2023} H. Kaul, J.A. Mudrock, G. Sharma, Q. Stratton, DP-coloring Cartesian products of graphs, J. Graph Theory 103 (2023) 285-306.

\bibitem{Li2023} P. Li, A. Yu, R. Hao, On the $D_{\alpha}$-spectral radius of cacti and bicyclic graphs, Discrete Math. 346 (2023) 113134.

\bibitem{Liu2023} G. Liu, K. Deng, The maximum Mostar indices of unicyclic graphs with given diameter, Appl. Math. Comput. 439 (2023) 127636.
\bibitem{Lovasz1968} L. Lov\'{a}sz, On covering of graphs, in: P. Erd\H{o}s, G. Katona (Eds.), Theory of Graphs, Academic Press, New York, 1968, pp. 231-236.

\bibitem{Oyewumi2018} O. Oyewumi, A.D. Akwu, T.I. Azer, Path decomposition of certain graphs, Open J. Discret. Appl. Math. 1 (1) (2018) 26-32.

\bibitem{Pyber1996} L. Pyber, Covering the edges of a connected graph by paths, J. Combin. Theory Ser. B 66 (1) (1996) 152-159.

\bibitem{Rall2023} D.F. Rall, On well-dominated direct, Cartesian and strong product graphs, Discrete Math. 346 (2023) 113178.

\bibitem{Sabidussi1960} G. Sabidussi, Graph multiplication, Math. Z. 72 (1960) 446-457.

\bibitem{Yan1998} L. Yan, On path decompositions of graphs, Ph.D. Thesis, Arizona State University, 1998.

\end{thebibliography}
\end{document}